\tikzset{
    block/.style = {draw, rectangle, 
        minimum height=1cm, 
        minimum width=2cm},	
    input/.style = {coordinate,node distance=1cm},
    output/.style = {coordinate,node distance=4cm},
    arrow/.style={draw, -latex,node distance=1.5cm},
    pinstyle/.style = {pin edge={latex-, black,node distance=2cm}},
    sum/.style = {draw, circle}
}
\newcommand{\R}{\mathbb{R}}
\newcommand{\C}{\mathbb{C}}
\DeclareMathOperator{\diag}{diag}
\newtheorem{example}{Example}[section]
\newtheorem{prop}[example]{Proposition}
\newtheorem{theorem}[example]{Theorem}
\newtheorem{defi}[example]{Definition}
\newtheorem{lemma}[example]{Lemma}
\newtheorem{rem}[example]{Remark}
\newtheorem*{theorem*}{Theorem}
\def\ps@pprintTitle{%
 \let\@oddhead\@empty
 \let\@evenhead\@empty
 \def\@oddfoot{}%
 \let\@evenfoot\@oddfoot}
\begin{document}

\begin{frontmatter}



\title{Stability radius for infinite-dimensional interconnected systems}


\author[BUW]{Birgit Jacob\corref{cor}}
\ead{bjacob@uni-wuppertal.de}
 \cortext[cor]{corresponding author}
 \author[BUW]{Sebastian M\"oller}
 \ead{smoeller@uni-wuppertal.de}
 \author[BUW]{Christian Wyss}
 \ead{wyss@math.uni-wuppertal.de}
\address[BUW]{School of Mathematics and Natural Sciences, University of Wuppertal, Wuppertal, Germany}

\begin{abstract}
The stability radius for finitely many interconnected linear exponentially stable well-posed systems with respect to static perturbations is studied. If the output space of each system is finite-dimensional, then a lower bound for the stability radius in terms of the norm of the corresponding transfer functions is given. Moreover, for regular linear systems with zero feedthrough operator and finite-dimensional output spaces a formula for the stability radius is developed.
\end{abstract}

\begin{keyword}
Stability radius \sep well-posed linear systems \sep  interconnected systems \sep static perturbations.



\end{keyword}

\end{frontmatter}

\section{Introduction}
This paper is concerned with  a finite number of exponentially stable well-posed linear systems $\Sigma_i$, $i=1,\ldots,N$, which are interconnected by a given structure. For the notion of well-posed linear system we refer the reader to Section \ref{sec:wpandregsystems}. Let $\Sigma:=\diag(\Sigma_1,\ldots,\Sigma_N)$, which  is again an exponentially stable  well-posed linear system. Clearly, $\Sigma$ represents the uncoupled system.
We assume that the magnitudes of the couplings bet\-ween the systems $\Sigma_i$ are uncertain. 
The matrix
$\mathcal{E}=(e_{ij})\in \R^{N \times N}$ describes the structure and the strength of the interconnection of the systems, that is, the entry $e_{ij}$ of  $\mathcal{E}$ can be interpreted as the strength of the  connection of the output of system $\Sigma_j$ to the input of system $\Sigma_i$. If $e_{ij}=0$, then the output of $\Sigma_j$ does not influence $\Sigma_i$.
We denote the input and output of the system  $\Sigma_i$ by $u_i$ and $y_i$, respectively, and  consider interconnections of the form  
$$
u_i(t)=\sum_{j=1}^N \Delta_{ij}e_{ij}y_j(t).
$$
where $\Delta_{ij}$ are unknown linear, bounded operators, describing the magnitude of the coupling.  For short we write $\Delta=(\Delta_{ij})$ and 
we denote the interconnected system by $\Sigma^{\Delta \circ \mathcal{E}}$.

 It is easy to see, that for operators $\Delta$ which are small in some norm, the interconnected system $\Sigma^{\Delta \circ \mathcal{E}}$ is again an exponentially stable well-posed linear system.
In applications it is natural to ask for the largest bound $r>0$ such that exponential stability is preserved for all magnitudes of the coupling $\Delta$ of norm strictly less than $r$ in a given normed perturbation set.
This largest bound $r>0$  is called the {\em stability radius}. 

The stability radius was introduced in 1986 by Hinrichsen and Pritchard \cite{HiPr86,hinrichsen1986stability} for finite-dimensional time-invariant systems. We note that for a fixed system the stability radius may depend on the normed set of perturbations and on the notion of stability. For finite-dimensional time-invariant systems there are formulas available for the stability radius with respect to different classes of perturbations, see  \cite{HiPr90} for a comprehensive survey. Pritchard and Townley introduced the stability radii for infinite-dimensional systems \cite{PrTo87,PrTo88}. 
The concept of the stability radius for interconnected systems was introduced by Hinrichsen and Pritchard in \cite{hinrichsen2009composite} for the finite-dimensional case. In this paper we consider interconnected infinite-dimensional systems and characterize the 
stability radius $r(\Sigma, \mathcal{E})$, which is given by
\begin{align*}
r(\Sigma,\mathcal{E})=
\sup &\{r>0 \mid \Delta \circ \mathcal{E} \text{ is an admissible feed-}\\
&\text{back and } \Sigma^{\Delta \circ \mathcal{E}} \text{ is exponentially stable}\\
&\text{for all } \Delta \text{ with } \|\Delta\|<r\}.
\end{align*}
Here the notion of an admissible feedback guarantees that $\Sigma^{\Delta \circ \mathcal{E}}$ is again a well-posed linear system, see Section \ref{sec:wpandregsystems}.  Moreover, the  perturbation class under consideration
(in particular the choice for the norm of $\Delta$)
will be introduced in Section \ref{sec:stabilityradiuswellposed}.


The main result of this paper is as follows. Let
$$
\resizebox{.47 \textwidth}{!}
{$
\Theta =\left[\sup\limits_{\omega \in \R}\rho\left(
 \begin{bmatrix}
\|\mathbf{G}_1(i\omega)\|^2&&&0\\
&\ddots&&\\0
&&&\|\mathbf{G}_N(i\omega)\|^2
\end{bmatrix}
\mathcal{E}^{\circ 2}
 \right)\right]^{\textstyle \frac{1}{2}}
$},
$$
where $\rho(\cdot)$ denotes the spectral radius and $\mathbf{G}_i$ is the transfer function of system $\Sigma_i$.

\begin{theorem}\label{thm:main}
If $\Sigma$ is an exponentially stable well-posed linear system and each system $\Sigma_i$ has a finite-dimensional output space, then 
\begin{align*}
\displaystyle r\left( \Sigma, \mathcal{E} \right)\ge\frac{1}{\Theta},
\end{align*}
If additionally $\Sigma$ is a regular linear system with feedthrough operator $0$, then the stability radius satisfies 
\begin{align*}
\displaystyle r\left( \Sigma, \mathcal{E} \right)=\frac{1}{\Theta}.
\end{align*}
\end{theorem}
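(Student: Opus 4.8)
The plan is to reduce the stability problem to a statement about the norm of a perturbed transfer function on the imaginary axis, and then to identify that norm with the spectral radius expression defining $\Theta$. The key conceptual tool is the standard characterization of robust stability for well-posed linear systems: exponential stability is preserved under the feedback $\Delta \circ \mathcal{E}$ precisely as long as the feedback operator $\Delta \circ \mathcal{E}$ is admissible and $I - \mathbf{G}(i\omega)(\Delta \circ \mathcal{E})$ remains boundedly invertible for all $\omega \in \R$, where $\mathbf{G} = \diag(\mathbf{G}_1, \ldots, \mathbf{G}_N)$ is the transfer function of the uncoupled system $\Sigma$. By a small-gain argument, a sufficient condition for invertibility for all $\|\Delta\| < r$ is that $r \cdot \sup_{\omega} \|\mathbf{G}(i\omega) \circ \mathcal{E}\|_{\mathrm{op}} < 1$ in the appropriate induced operator norm, which yields the lower bound; the sharp value then requires showing that one can construct a destabilizing $\Delta$ whose norm is exactly $1/\Theta$.

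First I would establish the lower bound $r(\Sigma,\mathcal{E}) \ge 1/\Theta$. The entrywise structure $u_i = \sum_j \Delta_{ij} e_{ij} y_j$ means the feedback operator has block entries $\Delta_{ij} e_{ij}$, so its action on the transfer function at frequency $i\omega$ is governed by the matrix whose $(i,j)$ block is $\mathbf{G}_i(i\omega)\Delta_{ij} e_{ij}$. The finite-dimensionality of each output space is what makes $\|\mathbf{G}_i(i\omega)\|$ finite and the estimates tractable. Taking operator norms blockwise, $\|\mathbf{G}_i(i\omega)\Delta_{ij}e_{ij}\| \le \|\mathbf{G}_i(i\omega)\| \, \|\Delta_{ij}\| \, |e_{ij}|$, and I would bound the norm of the full block operator by the spectral radius of the nonnegative matrix with entries $\|\mathbf{G}_i(i\omega)\| \, \|\Delta_{ij}\| \, |e_{ij}|$. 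Using $\|\Delta\| < r$ together with Perron--Frobenius monotonicity of the spectral radius on nonnegative matrices, one checks that this is dominated by $r \cdot \Theta$; hence if $r < 1/\Theta$ the small-gain condition holds uniformly in $\omega$ and stability is preserved, giving $r(\Sigma,\mathcal{E}) \ge 1/\Theta$. The appearance of $\mathcal{E}^{\circ 2}$ (Hadamard square) and the square root in $\Theta$ comes precisely from pairing the Cauchy--Schwarz/Perron--Frobenius estimate with the correct matching of $\|\mathbf{G}_i\|^2$ against $e_{ij}^2$.

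For the equality under the regularity and zero-feedthrough hypotheses, I would show that $1/\Theta$ is also an upper bound by constructing, for any $r > 1/\Theta$, a perturbation $\Delta$ with $\|\Delta\| < r$ that destabilizes the system. The strategy is to choose a frequency $\omega_0$ (or a sequence approaching the supremum) at which the spectral radius inside $\Theta$ is nearly attained, take a corresponding nonnegative Perron eigenvector of the matrix $\diag(\|\mathbf{G}_i(i\omega_0)\|^2)\,\mathcal{E}^{\circ 2}$, and build rank-one operators $\Delta_{ij}$ aligned with the maximizing input and output directions of $\mathbf{G}_i(i\omega_0)$ so that $I - \mathbf{G}(i\omega_0)(\Delta \circ \mathcal{E})$ becomes singular. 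The zero-feedthrough and regularity assumptions are what guarantee that these optimal directions and the value $\|\mathbf{G}_i(i\omega)\|$ are genuinely realized and that the constructed $\Delta$ is an admissible static feedback, so that singularity of the perturbed transfer function at $i\omega_0$ forces loss of exponential stability.

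The main obstacle I anticipate is the destabilization (upper bound) step: verifying that the rank-one $\Delta$ built from the Perron eigenvector and the singular directions of each $\mathbf{G}_i(i\omega_0)$ simultaneously (i) has operator norm exactly equal to $1/(\text{spectral radius})^{1/2}$ in the chosen perturbation norm, and (ii) produces an honest admissible feedback for which exponential stability actually fails rather than merely the frequency-domain invertibility condition. Reconciling the matrix-level Perron--Frobenius data at a single frequency with the functional-analytic admissibility and stability notions for well-posed regular systems — in particular handling the case where the supremum over $\omega$ is approached but not attained — is the delicate part, and I expect it to rely essentially on the equivalence between frequency-domain invertibility on the whole imaginary axis and exponential stability that holds for this class of systems.
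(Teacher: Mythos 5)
Your outline reproduces the paper's general architecture (frequency-domain small gain plus Perron--Frobenius for the lower bound; a rank-one perturbation built from a Perron eigenvector for the upper bound), but it hinges on a ``standard characterization of robust stability for well-posed linear systems'' that does not exist off the shelf in the form you use it, and this is precisely where the paper does its real work. For the lower bound, pointwise invertibility of $I-\mathbf{G}(i\omega)(\Delta\circ\mathcal{E})$ for every $\omega\in\R$ is not the operative condition: one needs invertibility of $I-\mathbb{L}_\infty(\Delta\circ\mathcal{E})$ on $L^2([0,\infty);Y)$, equivalently of the multiplication operator on $\mathcal{H}^2(Y)$, which demands uniform control of the inverse. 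The paper obtains this through a dedicated spectral argument (Lemma \ref{lem:multcom} together with Proposition \ref{thm:multop}: the approximate point spectrum of the $\mathcal{H}^2$-multiplication operator lies in the closure of the pointwise spectra on the axis, plus the fact that the boundary of the spectrum consists of approximate point spectrum; finite-dimensionality of $Y$ enters exactly here), combined with the eigenvector estimate showing every $\lambda\in\sigma(\mathbf{G}(i\omega)(\Delta\circ\mathcal{E}))$ satisfies $|\lambda|\le\|\Delta\|\Theta<1$. Incidentally, your phrasing ``bound the norm of the full block operator by the spectral radius of the nonnegative matrix'' is false as stated --- only the spectral radius, not the norm, admits such a majorant bound, and that is all one needs. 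More seriously: even once $I-\mathbb{L}_\infty(\Delta\circ\mathcal{E})$ is invertible (which yields admissibility), exponential stability of the closed-loop \emph{semigroup} does not follow by citation. The paper proves it: writing $T^{\Delta\circ\mathcal{E}}(t)x_0=T(t)x_0+\Phi_t((\Delta\circ\mathcal{E})y_{x_0})$ with $y_{x_0}=(I-\mathbb{L}_\infty(\Delta\circ\mathcal{E}))^{-1}\Psi_\infty x_0$, it uses boundedness of $\Phi_\infty$ on $L^2$ (Remark \ref{phiinL2}) and Datko's theorem to get square-integrability of trajectories. Your proposal has no substitute for this input--output-to-state step.

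For the upper bound you again invoke the (converse) equivalence --- singularity of $I-\mathbf{G}(i\omega_0)(\Delta\circ\mathcal{E})$ at one frequency forces loss of exponential stability --- and flag it yourself as the delicate part. The paper avoids this implication entirely: regularity and $D=0$ give the explicit closed-loop generator $A^{\Delta\circ\mathcal{E}}=A_{-1}+B(\Delta\circ\mathcal{E})C_L$, and the rank-one $\Delta$ constructed from the Perron eigenvector yields an explicit nonzero vector $x$ with components $x_k=(i\omega_0 I-(A_k)_{-1})^{-1}B_ku_k$ satisfying $A^{\Delta\circ\mathcal{E}}x=i\omega_0 x$; an eigenvalue on the imaginary axis immediately precludes exponential stability (and if $\Delta\circ\mathcal{E}$ is not admissible, that $\Delta$ counts against the radius anyway). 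So regularity and zero feedthrough buy the generator formula and the representation $\mathbf{G}_k(i\omega_0)u_k=(C_k)_Lx_k$ --- not, as you claim, the attainment of optimal directions. On the contrary, since the input spaces $U_i$ may be infinite-dimensional, $\|\mathbf{G}_i(i\omega_0)\|$ need not be attained by any vector, and the supremum over $\omega$ defining $\Theta$ need not be attained either; the paper handles both with an $\alpha$-approximate maximizer ($\|\mathbf{G}_k(i\omega_0)u_k\|\ge\alpha\|\mathbf{G}_k(i\omega_0)\|\,\|u_k\|$) and a $\delta$-argument giving $\|\Delta\|_{2,\infty}\le 1/(\Theta-\delta)$ for every $\delta\in(0,\Theta)$, which suffices. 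Your construction as described assumes exact maximizing vectors and exact attainment of the supremum, which is a genuine (though repairable) gap.
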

This theorem comprises the results of Theorems \ref{thm:stabilityradiusforwps}  and \ref{thm:stabilityradiusforrs}. We note that  Theorem \ref{thm:main} is even new in the finite-dimensional setting as we
study the stability radius also for 
a norm that is different to the one 
in \cite{hinrichsen2009composite}.
Further, Theorem \ref{thm:main} generalizes the main result of \cite{PrTo87} slightly as in  \cite{PrTo87} by assumption system $\Sigma$ has feedthrough operator $0$.

We proceed as follows. In Section \ref{secpreliminaries} we summarize some known facts on positive matrices and Hardy spaces. The spectrum of multiplication operators are studied in Section
\ref{sec:mult}. A short introduction to well-posed linear and regular systems is given in 
\ref{sec:wpandregsystems}. The main results of this paper are formulated and proved in Sections
\ref{sec:stabilityradiuswellposed}
and \ref{sec:stabilityradiusregular}. 

\section{Preliminaries}\label{secpreliminaries}
In this section we give  a short summary of results  concerning the spectrum  and  the spectral radius for non-negative matrices as well as operator matrices which are useful for the proof of the main results of this paper. Moreover, we  summarize some results on Hardy spaces. 

A matrix $A=(a_{ij})_{i,j \in \{1,\ldots,N\}} \in \mathbb {R}^{N \times N}$,   $N \in \mathbb{N}$, is called  
{\em non-negative} (we briefly write $A \in \R_{\ge 0}^{N \times N})$, if  $a_{ij} \ge 0$ for every entry $a_{ij}$ of $A$. Further, $A$ is called 
{\em positive} ($A\in \R_{> 0}^{N \times N}$) if  $a_{ij}>0$ for every entry $a_{ij}$ of $A$.
Similarly, we define non-negative (resp. positive) vectors and denote  the set of such vectors  by  $\R_{\ge 0}^N$(resp. $ \R_{>0}^{N}$).
For a matrix  $A \in \R^{N \times N}$,  the {\em spectrum} $\sigma(A)$ is defined by
$\sigma(A)=\{ \lambda \in \C \mid \lambda$ is an eigenvalue of $A \}$ and $\varrho(A):=\C\setminus\sigma(A)$ denotes the {\em resolvent set} of $A$. Further,  we denote the {\em spectral radius} by 
$\rho(A)= \sup\{ |\lambda|  \mid \lambda \in \sigma(A)\}$.

\begin{lemma}[Perron-Frobenius {\cite[Chapter $8$]{horn1990matrix}}]
Suppose that  $A,B \in \R_{ \ge 0}^{N \times N}$. Then  the following results hold:
\begin{itemize}\label{lemmaperronforbenius}
\item[(i)] $\rho(A) \in \sigma(A)$ and   there exists a non-negative eigenvector $z$ of $A$ corresponding to the eigenvalue $\rho(A)$. The vector $z$ is called {\em Perron vector}.
\item[(ii)]If there exists  $\alpha \ge 0$  and $z \in \R_{ \ge 0}^N,~ z \neq 0, $  such that $Az \ge \alpha z$, then $\rho(A) \ge  \alpha$.
\item[(iii)] If there exists $\beta \ge 0$ and  $z \in \R_{>0}^N$ such that $Az \le \beta z$, then $\rho(A) \le \beta$.
\item[(iv)] If $A \le B$, i.e.~$a_{ij} \le b_{ij}$ for $i,j \in \{1, \ldots, N\}$ with $A =( a_{ij})$ and  $ B=( b_{ij} )$, then $\rho(A)\le \rho(B)$.
\end{itemize}
\end{lemma}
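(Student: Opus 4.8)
The plan is to reduce all four assertions to two standard tools: Gelfand's formula $\rho(A)=\lim_{k\to\infty}\|A^{k}\|^{1/k}$, valid in any submultiplicative matrix norm, and the elementary fact that the entrywise order is preserved under products of non-negative matrices, so that $0\le C\le D$ implies $0\le C^{k}\le D^{k}$ for every $k$. I would dispatch (ii)--(iv) first, since each is immediate from these tools, and treat the existence statement (i) last, as it is the only part requiring a genuine argument.

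For (ii) I would iterate: from $A\ge 0$, $z\ge 0$ and $Az\ge\alpha z$ one gets $A^{k}z\ge\alpha^{k}z$ by induction. Choosing an index $i$ with $z_{i}>0$ and using $\|A^{k}z\|\ge(A^{k}z)_{i}\ge\alpha^{k}z_{i}$ in the sup-norm, it follows that $\|A^{k}\|^{1/k}\ge\alpha\,(z_{i}/\|z\|)^{1/k}\to\alpha$, whence $\rho(A)\ge\alpha$ by Gelfand. Part (iii) is dual: iterating $Az\le\beta z$ gives $A^{k}z\le\beta^{k}z$, and since $z>0$ there are constants $0<c\le C$ with $c\mathbf{1}\le z\le C\mathbf{1}$; then $A^{k}\mathbf{1}\le c^{-1}A^{k}z\le c^{-1}\beta^{k}z\le (C/c)\beta^{k}\mathbf{1}$, so the non-negative entries of $A^{k}$, being bounded by its row sums, satisfy $\|A^{k}\|\le C'\beta^{k}$ and Gelfand yields $\rho(A)\le\beta$. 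For (iv), $0\le A\le B$ gives $0\le A^{k}\le B^{k}$, hence $\|A^{k}\|\le\|B^{k}\|$ in any entrywise-monotone norm, and Gelfand gives $\rho(A)\le\rho(B)$.

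The substantive part is (i). I would first treat a strictly positive matrix $A$. On the simplex $S=\{z\ge 0:\sum_{i}z_{i}=1\}$ the map $z\mapsto Az/\|Az\|_{1}$ is continuous and maps $S$ into itself, so Brouwer's fixed point theorem produces $z^{*}\in S$ with $Az^{*}=\lambda^{*}z^{*}$ for $\lambda^{*}=\|Az^{*}\|_{1}>0$; here $z^{*}=Az^{*}/\lambda^{*}>0$. Applying the same construction to $A^{T}$, which is again positive, yields a strictly positive left eigenvector $u$ with $u^{T}A=\lambda^{*}u^{T}$. For any eigenpair $(\lambda,v)$ of $A$, the triangle inequality and $A\ge 0$ give $A|v|\ge|Av|=|\lambda|\,|v|$; pairing with $u>0$ and using $u^{T}|v|>0$ shows $\lambda^{*}\ge|\lambda|$, so $\lambda^{*}=\rho(A)$. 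Thus $\rho(A)$ is an eigenvalue with positive eigenvector.

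Finally I would pass from $A>0$ to general $A\ge 0$ by perturbation. Set $A_{\varepsilon}=A+\varepsilon\,\mathbf{1}\mathbf{1}^{T}>0$; the positive case provides a unit eigenvector $z_{\varepsilon}\in S$ with $A_{\varepsilon}z_{\varepsilon}=\rho(A_{\varepsilon})z_{\varepsilon}$. Compactness of $S$ lets me extract $z_{\varepsilon_{k}}\to z\in S$, so $z\ge 0$ and $z\neq 0$, while continuity of the spectral radius in the matrix entries gives $\rho(A_{\varepsilon})\to\rho(A)$ as $\varepsilon\to 0$. Passing to the limit in the eigenvalue equation yields $Az=\rho(A)z$, so $\rho(A)\in\sigma(A)$ with non-negative eigenvector $z$. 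I expect the main obstacle to lie exactly here: ensuring the limiting vector is nonzero, which is handled by the simplex normalization, and justifying $\rho(A_{\varepsilon})\to\rho(A)$, which is handled by continuity of the spectrum reinforced by the monotonicity $\rho(A_{\varepsilon})\ge\rho(A)$ from part (iv).
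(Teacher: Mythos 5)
The paper does not actually prove this lemma: it is quoted, with citation, from Horn and Johnson, Chapter 8, and used as a black box, so there is no in-text argument to compare against. Your blind proof is a correct self-contained substitute, and its route differs from the cited source. For (ii)--(iv) your reduction to Gelfand's formula together with order-preservation of products of non-negative matrices is sound: the inductions $A^k z\ge\alpha^k z$, $A^k z\le\beta^k z$ and $0\le A^k\le B^k$ are all valid, and the norm estimates in the $\infty$-norm (max row sum, which is entrywise monotone on non-negative matrices) go through, including the degenerate cases $\alpha=0$ and $\beta=0$. Horn--Johnson instead derive these from Collatz--Wielandt-type max-min characterizations and row-sum bounds, which is more elementary (no limit formula needed) and yields quantitative eigenvalue localization as a by-product; your route is shorter and uniform across the three parts. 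For (i), your Brouwer fixed-point argument on the simplex for $A>0$, followed by the perturbation $A_\varepsilon=A+\varepsilon\mathbf{1}\mathbf{1}^{T}$ with simplex-normalized eigenvectors, compactness, and continuity of $\rho$ (roots of the characteristic polynomial depend continuously on the entries, reinforced by the monotonicity from (iv)), is a standard valid alternative to Horn--Johnson's fixed-point-free proof of Perron's theorem; the simplex normalization correctly guarantees the limit vector is nonzero, and the argument covers $\rho(A)=0$.

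One step you assert rather than prove: applying the fixed-point construction to $A^{T}$ produces a positive left eigenvector $u$ with some eigenvalue $\mu>0$, and you write $u^{T}A=\lambda^{*}u^{T}$ with the \emph{same} $\lambda^{*}$ obtained for $A$. That identity needs a line of justification: pairing gives $\mu\, u^{T}z^{*}=u^{T}Az^{*}=\lambda^{*}u^{T}z^{*}$, and since $u>0$ and $z^{*}>0$ one has $u^{T}z^{*}>0$, hence $\mu=\lambda^{*}$. (Alternatively, note that any positive $\mu$ with positive left eigenvector dominates all eigenvalue moduli by the same $A|v|\ge|\lambda|\,|v|$ pairing, so $\mu=\rho(A)=\lambda^{*}$.) With that line inserted, your proof is complete and correct.
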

For  $A,B \in \R^{N \times N}$  the \emph{Hadamard product} $\circ$ of $A$ and $B$ is defined  by
\begin{align*}\label{eqn:hadamrdmatrix}
A \circ B &= \begin{bmatrix}
a_{11}&&\hdots&a_{1N}\\
\vdots&&\ddots&\vdots\\
a_{N1}&&\hdots&a_{NN}
 \end{bmatrix}
\circ 
\begin{bmatrix}
b_{11}&&\hdots&b_{1N}\\
\vdots&&\ddots&\vdots\\
b_{N1}&&\hdots&b_{NN}
 \end{bmatrix}\\
&:=
\begin{bmatrix}
a_{11} \cdot b_{11}&&\hdots&a_{1N}\cdot b_{1N}\\
\vdots&&\ddots& \vdots\\
a_{N1} \cdot b_{N1}&&\hdots&a_{NN} \cdot b_{NN}
\end{bmatrix}
\end{align*}
as the componentwise multiplication of the entries  of  both matrices.
For $A \circ A$ we briefly write $A^{\circ 2}$.

Let $X$ and $Y$ be  complex Banach spaces. We denote the set of all bounded linear operators from $X$ to $Y$  by $\mathcal{L}(X, Y)$. The operator norm of an operator $A \in \mathcal{L}(X,Y)$ is denoted by $\| \cdot \|$. In the case of $X= Y$ we briefly write $\mathcal{L}(X)$ for  $\mathcal{L}(X,X)$. 
For an operator $A \in \mathcal{L}(X)$ we denote its spectrum by $\sigma(A)$, its approximate point spectrum by $\sigma_{\mathrm{app}}(A)$, its resolvent by $\varrho(A)$ and its spectral radius by $\rho(A)$.\\
Suppose that $X_1,\ldots,X_N$ and $Y_1,\ldots,Y_N$ are complex Banach spaces and  $A=\left( A_{ij} \right)_{i,j \in \{1, \ldots, N \}}\in  \mathcal{L}(X, Y)$ with $X = \bigoplus_{i=1}^N X_i,~$ 
$Y= \bigoplus_{i=1}^N Y_i$ and $A_{ij} \in \mathcal{L}(X_j, Y_i)$. A matrix of this form is called  an \emph{operator matrix}. 

Let $\mathcal{E}=\left( e_{ij} \right)\in \R_{ \ge 0}^{N \times N}$  and $A$ an  operator matrix. The \rm{Hadamard product} of   $A$ with $\mathcal{E}$,  denoted by $\circ$, is defined via
\begin{equation*}
\circ: \mathcal{L}(X, Y) \times \R_{ \ge 0}^{N \times N} \rightarrow  \mathcal{L}(X, Y)
\end{equation*}
\begin{equation*}
A \circ \mathcal{E}=
\begin{bmatrix}
e_{11}A_{11}&&\dots&e_{1N}A_{1N}\\
\vdots&&\ddots&\vdots\\
e_{N1}A_{N1}&&\dots&e_{NN}A_{NN}
\end{bmatrix}.
\end{equation*}

Let $H$ be a complex separable  Hilbert space and $Z$ a Banach space.
For $\omega\in\R$ let $\C_\omega=\{s \in \C \mid  \mathrm{Re}(s)>\omega\}$.
We  define the \emph{Hardy spaces} $H^2(H)$ and $H^{\infty}(Z)$ by
\begin{align*}
\mathcal{H}^2(H):&=\{ f: \C_0 \rightarrow H \mid  f \mathrm{~is~holomorphic~and }\,\\
& \|f\|_2^2= \sup_{x >0} \frac{1}{2  \pi} \int_{-\infty}^{\infty}  \|f(x+i \omega)\|^2 d \omega < \infty\}
\end{align*} 
and
\begin{align*}
H^{\infty}(\C_{\omega};Z)&:= \{ G:\C_{\omega}\rightarrow Z \mid G \textrm{~is~holomorphic } \\
&\textrm{and~}\sup\limits_{Re(s) >\omega}\|G(s)\|< \infty \}
\end{align*}
\begin{lemma}[\cite{jacob2012linear}]\label{lemhut}
$\mathcal{H}^2(H)$ has the  following properties:
\begin{itemize}
\item For each $f \in \mathcal{H}^2(H)$ there exists a unique function $\tilde{f} \in L^2\left( (-i \infty, i \infty);H \right)$ such that 
$$
\lim_{x \searrow 0} f(x+ i \omega)= \tilde{f}(i \omega)
$$
for almost all $\omega \in \R$,
\item  $\lim_{x \searrow 0} \|f(x+ \cdot)- \tilde{f }(\cdot)) \|_{L^2\left( (-i \infty, i \infty);H \right)}=0$
\item The mapping $f \rightarrow \tilde{f}$ is linear, injective and satisfies
\[ 
\|f\|_2^2=\frac{1}{2 \pi} \int_{- \infty}^{\infty} \|\tilde{f}(i \omega)\|^2 d \omega.
\]
\item   $\mathcal{H}^2(H)$ is a Hilbert space with the inner product
\[
\langle f,g \rangle:=\frac{1}{2 \pi} \int_{- \infty}^{\infty} \langle \tilde{f}(i \omega) , \tilde{g}(i \omega) \rangle d \omega.
\]
\item Let $f \in \mathcal{H}^2(H)$ be a function different from the zero function. Then $\tilde f$ is non-zero almost everywhere on the imaginary axis.
\end{itemize}
\end{lemma}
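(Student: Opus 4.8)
The plan is to realize $\mathcal{H}^2(H)$ as the image of $L^2(0,\infty;H)$ under the vector-valued Laplace transform, i.e.\ to invoke a Paley--Wiener theorem on the right half-plane. For $g\in L^2(0,\infty;H)$ set $(\mathcal{L}g)(s)=\int_0^\infty e^{-st}g(t)\,dt$ for $s\in\C_0$. The two facts I would establish are: (a) for every $x>0$,
\[
\frac{1}{2\pi}\int_{-\infty}^{\infty}\|(\mathcal{L}g)(x+i\omega)\|^2\,d\omega=\int_0^\infty e^{-2xt}\|g(t)\|^2\,dt,
\]
which is just Plancherel's theorem applied to the $H$-valued function $t\mapsto e^{-xt}g(t)$ (extended by zero to $t<0$), whose Fourier transform is $\omega\mapsto(\mathcal{L}g)(x+i\omega)$; for a separable Hilbert space $H$ the scalar Plancherel theorem extends verbatim by expanding in an orthonormal basis; and (b) $\mathcal{L}$ maps $L^2(0,\infty;H)$ \emph{onto} $\mathcal{H}^2(H)$. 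The nontrivial direction here is surjectivity: given $f\in\mathcal{H}^2(H)$, one extracts from the uniformly $L^2$-bounded family $f(x+i\cdot)$ a boundary function and shows, using holomorphy together with the half-plane decay, that its inverse Fourier transform is supported in $[0,\infty)$, so that $f=\mathcal{L}g$ with $g\in L^2(0,\infty;H)$.

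Since the right-hand side of (a) increases to $\|g\|_{L^2(0,\infty;H)}^2$ as $x\searrow0$ and is bounded by it for every $x>0$, taking the supremum over $x>0$ yields $\|\mathcal{L}g\|_2^2=\|g\|_{L^2}^2$. Thus $f\mapsto\tilde f$ is a linear isometry, which gives the norm formula directly. For the boundary values, the second bullet follows at once: as $x\searrow0$ we have $e^{-x\cdot}g\to g$ in $L^2(0,\infty;H)$, so by Plancherel $(\mathcal{L}g)(x+i\cdot)$ converges in $L^2((-i\infty,i\infty);H)$ to the Fourier transform $\tilde f$ of $g$ on the imaginary axis. The pointwise a.e.\ limit in the first bullet then follows by passing to a subsequence $x_k\searrow0$ along which the $L^2$-convergent sequence converges almost everywhere, and the limit is unique since two such boundary functions would agree in $L^2$.

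Linearity of $f\mapsto\tilde f$ is clear from linearity of $\mathcal{L}$, and injectivity follows from the isometry. For the Hilbert-space structure, the bijectivity in (b) together with the isometry identifies $\mathcal{H}^2(H)$ isometrically with $L^2(0,\infty;H)$, so completeness is inherited; the stated inner product formula follows from the norm formula by polarization. Finally, for the non-vanishing statement I would reduce to the scalar case. Fix a countable dense subset $\{h_n\}\subset H$. For each $n$ the function $s\mapsto\langle f(s),h_n\rangle$ lies in the scalar Hardy space $\mathcal{H}^2(\C)$ with boundary function $\omega\mapsto\langle\tilde f(i\omega),h_n\rangle$. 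If $f\not\equiv0$, then $f(s_0)\neq0$ for some $s_0\in\C_0$, hence $\langle f(s_0),h_n\rangle\neq0$ for some $n$, so $s\mapsto\langle f(s),h_n\rangle$ is a nonzero scalar Hardy function. The classical fact that a nonzero scalar Hardy function cannot have boundary values vanishing on a set of positive measure then gives $\langle\tilde f(i\omega),h_n\rangle\neq0$, and a fortiori $\tilde f(i\omega)\neq0$, for almost every $\omega$.

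The routine parts are the Plancherel identity and the transfer of the Hilbert-space structure through the isometric isomorphism. The main obstacle is the last bullet: the non-vanishing of boundary values rests on the log-integrability theorem for scalar Hardy functions (equivalently, their membership in the Nevanlinna class together with Jensen's inequality), which is where the genuine complex-analytic input enters; the surjectivity in the Paley--Wiener step is the only other place requiring real work, while the vector-valued reduction via dense projections is then mere bookkeeping.
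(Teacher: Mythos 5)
The paper itself offers no proof of this lemma --- it is quoted verbatim from the cited textbook --- so the comparison is with the standard treatment there, and your overall plan matches it: the Paley--Wiener identification of $\mathcal{H}^2(H)$ with $L^2(0,\infty;H)$ via the vector-valued Laplace transform, the isometry and polarization for the Hilbert-space structure, and the reduction to scalar Hardy functions through a countable dense subset of $H$ for the almost-everywhere nonvanishing are all sound and essentially the standard route. There is, however, one genuine gap: your argument for the first bullet. From $f(x+i\cdot)\to\tilde f$ in $L^2$ you extract a sequence $x_k\searrow 0$ along which $f(x_k+i\omega)\to\tilde f(i\omega)$ for almost every $\omega$; but the lemma asserts the \emph{full} limit $x\searrow 0$, and $L^2$-convergence of a family never yields a.e.\ convergence of the whole family by subsequence extraction. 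Your appeal to uniqueness does not repair this: that every a.e.-convergent subsequence has the same limit $\tilde f$ is perfectly consistent with the full limit failing to exist on a set of positive measure, exactly as in the moving-bump examples where a family converges in $L^2$ but at no point.

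The standard fix is a Fatou-type theorem, and it sits naturally inside your own Paley--Wiener picture: since the Fourier transform of the Poisson kernel $P_x$ for the half-plane is $t\mapsto e^{-x|t|}$ and $g$ is supported on $[0,\infty)$, one has $f(x+i\cdot)=P_x * \tilde f$ directly. Then a.e.\ convergence as $x\searrow0$ follows from the dense-subclass argument together with the maximal inequality $\sup_{x>0}\|(P_x * u)(\omega)\|\le (P_x * \|u\|)(\omega)\le C\,(M\|u\|)(\omega)$, where $M$ is the Hardy--Littlewood maximal operator; this is compatible with $H$-valued functions because the bound reduces everything to the scalar function $\|u\|$, and continuous compactly supported $H$-valued functions (for which convergence holds everywhere) are dense in $L^2(\R;H)$. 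Equivalently, for $g\in L^1\cap L^2(0,\infty;H)$ dominated convergence gives $f(x+i\omega)\to\tilde f(i\omega)$ at \emph{every} $\omega$, and the same maximal estimate transfers this to all $g\in L^2$. With this step supplied, the remainder of your proposal --- the vector-valued Plancherel identity, the surjectivity of the Laplace transform, the isometry, completeness and polarization, and the log-integrability argument for the last bullet --- goes through as sketched.
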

For  a function $f \in L^2([0,\infty);H)$  the \emph{Laplace transform} is a function $\hat{f}$ defined by
\[
\hat{f}(s)= \int_0^{\infty} f(t) e^{-st}dt,~~s \in \C_0.
\]
\begin{theorem}[Paley-Wiener theorem \cite{thomas1997vector}]\label{paleywienertheorem}
The Laplace transform  is an isometric  isomorphism from $L^2([0,\infty);H)$  to $\mathcal{H}^2(H)$.
\end{theorem}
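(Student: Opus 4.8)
The plan is to realise the Laplace transform along each vertical line $\operatorname{Re}(s)=x$ as a Fourier transform and then invoke the vector-valued Plancherel theorem, which is available since $H$ is a separable Hilbert space. Using the convention $\mathcal F g(\omega)=\int_{-\infty}^\infty g(t)e^{-i\omega t}\,dt$, normalised so that $\frac{1}{2\pi}\int_{-\infty}^\infty\|\mathcal F g(\omega)\|^2\,d\omega=\int_{-\infty}^\infty\|g(t)\|^2\,dt$ (this matches the factor $\frac{1}{2\pi}$ in the definition of $\|\cdot\|_2$), I observe that for $f\in L^2([0,\infty);H)$ and $x>0$,
\[
\hat f(x+i\omega)=\int_0^\infty f(t)e^{-xt}e^{-i\omega t}\,dt=\mathcal F\bigl(fe^{-x\,\cdot}\bigr)(\omega),
\]
the Fourier transform of $t\mapsto f(t)e^{-xt}$ extended by $0$ to $t<0$. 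First I would check that $\hat f$ is holomorphic on $\C_0$: the Cauchy--Schwarz estimate $\int_0^\infty\|f(t)\|e^{-xt}\,dt\le\|f\|_{L^2}(2x)^{-1/2}$ yields absolute and locally uniform convergence of the integral, and holomorphy then follows either by differentiating under the integral sign or from Morera's theorem combined with Fubini.

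For the isometry, Plancherel applied to $fe^{-x\,\cdot}\in L^2(\R;H)$ gives
\[
\frac{1}{2\pi}\int_{-\infty}^\infty\|\hat f(x+i\omega)\|^2\,d\omega=\int_0^\infty\|f(t)\|^2 e^{-2xt}\,dt.
\]
The right-hand side increases monotonically as $x\searrow0$ and, by monotone convergence, tends to $\|f\|_{L^2}^2$; hence the supremum over $x>0$ equals $\|f\|_{L^2}^2$. This shows at once that $\hat f\in\mathcal H^2(H)$ and that $\|\hat f\|_2=\|f\|_{L^2}$, so the Laplace transform is a well-defined linear isometry, and injectivity is then immediate.

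The remaining and main point is \emph{surjectivity}. Given $F\in\mathcal H^2(H)$, I would define a candidate preimage by
\[
f(t):=\frac{e^{xt}}{2\pi}\int_{-\infty}^\infty F(x+i\omega)e^{i\omega t}\,d\omega
\]
and first show this is independent of $x>0$. This is where the hard part lies: applying Cauchy's integral theorem to the holomorphic map $s\mapsto F(s)e^{st}$ on a rectangle with vertical sides $\operatorname{Re}s=x_1,x_2$ and horizontal sides $\operatorname{Im}s=\pm R$, the two vertical contributions are exactly the two defining integrals, so independence of $x$ follows once the horizontal contributions vanish as $R\to\infty$. Since $F$ is only square-integrable on vertical lines and need not decay pointwise, I expect to establish this vanishing merely along a suitable sequence $R_n\to\infty$, exploiting that $\omega\mapsto\|F(\,\cdot+i\omega)\|$ is in $L^2$ and hence admits arguments along which it tends to $0$.

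With $x$-independence established, note that $f(t)e^{-xt}=\mathcal F^{-1}\bigl(F(x+i\,\cdot)\bigr)(t)$, so Plancherel gives $\int_{-\infty}^\infty\|f(t)\|^2 e^{-2xt}\,dt=\frac{1}{2\pi}\int_{-\infty}^\infty\|F(x+i\omega)\|^2\,d\omega\le\|F\|_2^2$ for every $x>0$. Letting $x\to+\infty$ forces $f=0$ a.e.\ on $(-\infty,0)$, since otherwise $e^{-2xt}=e^{2x|t|}$ would make the integral over $t<0$ diverge; letting $x\searrow0$ and using monotone convergence gives $\int_0^\infty\|f(t)\|^2\,dt\le\|F\|_2^2$, so $f\in L^2([0,\infty);H)$. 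Finally, taking the Fourier transform of the identity $f(t)e^{-xt}=\mathcal F^{-1}\bigl(F(x+i\,\cdot)\bigr)(t)$ returns $\hat f(x+i\omega)=F(x+i\omega)$ for all $x>0$, that is $\hat f=F$. The boundary function supplied by Lemma~\ref{lemhut} can be used to identify $\lim_{x\searrow0}F(x+i\,\cdot)=\tilde F$ in $L^2$ and thereby match $f$ with $\mathcal F^{-1}\tilde F$, but it is the contour argument and the support conclusion for $t<0$ that form the core of the proof.
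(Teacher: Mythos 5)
The paper itself offers no proof of this statement: it is quoted as a known result with the citation \cite{thomas1997vector}, so there is no internal argument to compare against, and your proposal must be judged on its own merits. On those merits it is the standard classical proof of the vector-valued Paley--Wiener theorem, and it is essentially correct: the isometry follows from Plancherel applied to $t\mapsto f(t)e^{-xt}$ on each vertical line (valid in $L^2(\R;H)$ precisely because $H$ is a separable Hilbert space; for general Banach spaces this step fails) together with monotone convergence as $x\searrow 0$, and surjectivity follows from inverting the Fourier transform on a fixed line, proving $x$-independence by a contour argument, and using the uniform bound $\int_{\R}\|f(t)\|^2e^{-2xt}\,dt\le\|F\|_2^2$ to force $f=0$ a.e.\ on $(-\infty,0)$ as $x\to+\infty$.

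Two points in your sketch need tightening, both of which you partially anticipate. First, for $F\in\mathcal{H}^2(H)$ the integral defining $f(t)$ converges only in the $L^2$-sense, not absolutely, so the candidate preimage has no a priori pointwise meaning; the Cauchy-theorem comparison must therefore be run on the truncated integrals $\int_{-R_n}^{R_n}$, which converge in $L^2(dt)$ to the respective inverse transforms, and one passes to a subsequence converging almost everywhere before concluding $e^{x_1t}\bigl(\mathcal{F}^{-1}F(x_1+i\,\cdot)\bigr)(t)=e^{x_2t}\bigl(\mathcal{F}^{-1}F(x_2+i\,\cdot)\bigr)(t)$ a.e. Second, for the vanishing of the horizontal sides the natural integrable quantity is not $\omega\mapsto\|F(\,\cdot+i\omega)\|$ itself but $h(\omega)=\int_{x_1}^{x_2}\|F(\sigma+i\omega)\|^2\,d\sigma$, which lies in $L^1(\R)$ by Fubini, since each vertical-line integral is bounded by $2\pi\|F\|_2^2$ and the strip has finite width; hence there exist $R_n\to\infty$ with $h(\pm R_n)\to 0$, and Cauchy--Schwarz bounds the horizontal segment at height $\pm R_n$ by $e^{\max(x_1,x_2)|t|}\sqrt{x_2-x_1}\,\sqrt{h(\pm R_n)}$, locally uniformly in $t$. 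With these two standard repairs, which are exactly the ones your sketch flags, the argument is complete and matches the textbook proof to which the paper defers.
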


\section{The spectrum of a multiplication operator}\label{sec:mult}

Let $\left( X,\Sigma, \mu \right)$ be a $\sigma$-finite measure space  and  $q:X \rightarrow \C^{N \times N}$ a measurable matrix-valued  function.
The operator $A_q$ on $L^2(X;\C^N)$, defined  by  $A_q:f \mapsto qf$, i.e.
$$
\left( A_qf \right)(s)=q(s)f(s),\quad s\in X,
$$
for $f \in D(A_q)=\{f \in L^2(X;\C^{N }) \mid qf  \in L^2(X;\C^N)\}$
is called a  matrix multiplication operator.
We have the following relation between  the spectrum of the multiplication  operator $A_q$  and  the pointwise computed spectra of the function $q$. 
\begin{prop}[{\cite[Prop.~1]{holderrieth1991matrix}}]\label{thm:multop}
For $X= i \R,~\mu$ the Lebesgue-measure  and $q$ a continuous  and bounded matrix-valued function on $X$, we have 
$$
\sigma(A_q)= \overline{\bigcup_{\omega \in \R} \sigma(q(i \omega))}.
$$
\end{prop}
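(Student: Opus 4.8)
The plan is to exploit the fact that $A_q - \lambda$ is again a matrix multiplication operator, namely $A_{q-\lambda}$, whose inverse---when it exists as a bounded operator---is the multiplication operator $A_r$ associated with the pointwise inverse $r(i\omega) = (q(i\omega) - \lambda)^{-1}$. Indeed, if $r$ is essentially bounded, then $A_r A_{q-\lambda} = A_{q-\lambda} A_r = I$ because $r(i\omega)(q(i\omega)-\lambda) = (q(i\omega)-\lambda)r(i\omega)$ equals the identity matrix for each $\omega$, so $\lambda \in \varrho(A_q)$; conversely the resolvent can only be bounded if $r$ is. Thus the task reduces to two inclusions: every point of $\bigcup_{\omega} \sigma(q(i\omega))$ lies in $\sigma(A_q)$, and every $\lambda$ lying at positive distance from all the pointwise spectra lies in $\varrho(A_q)$.

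For the inclusion $\overline{\bigcup_{\omega} \sigma(q(i\omega))} \subseteq \sigma(A_q)$ I would construct localized approximate eigenfunctions. Fix $\omega_0 \in \R$ and $\lambda \in \sigma(q(i\omega_0))$; since $q(i\omega_0)$ is a matrix, $\lambda$ is a genuine eigenvalue, so pick a unit vector $v \in \C^N$ with $(q(i\omega_0) - \lambda)v = 0$. Using continuity of $q$, for every $\varepsilon > 0$ there is $\delta > 0$ with $\|q(i\omega) - q(i\omega_0)\| < \varepsilon$ whenever $|\omega - \omega_0| < \delta$. Setting $f_\delta(i\omega) = (2\delta)^{-1/2}\chi_{\{|\omega-\omega_0|<\delta\}}(\omega)\,v$ gives a unit vector in $L^2(i\R;\C^N)$ for which $(q(i\omega)-\lambda)v = (q(i\omega)-q(i\omega_0))v$ on the support, whence $\|(A_q - \lambda)f_\delta\| \le \varepsilon$. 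This shows $\lambda \in \sigma_{\mathrm{app}}(A_q) \subseteq \sigma(A_q)$, and since $\sigma(A_q)$ is closed, it contains the closure of the union.

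For the reverse inclusion, suppose $\lambda \notin \overline{\bigcup_{\omega} \sigma(q(i\omega))}$, so that there is $d > 0$ with $|\lambda - \mu| \ge d$ for every eigenvalue $\mu$ of every $q(i\omega)$. I then need a bound on $\|(q(i\omega)-\lambda)^{-1}\|$ that is uniform in $\omega$, and this is the main obstacle: because the matrices $q(i\omega)$ need not be normal, the resolvent norm is not merely the reciprocal of the distance to the spectrum and can be far larger. To control it I would invoke the adjugate formula $(q(i\omega)-\lambda)^{-1} = \frac{\mathrm{adj}(q(i\omega)-\lambda)}{\det(q(i\omega)-\lambda)}$. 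The determinant is the product of the $N$ factors $\mu - \lambda$ and so has modulus at least $d^N$, while the entries of the adjugate are $(N-1)\times(N-1)$ minors, hence polynomials of degree $N-1$ in the entries of $q(i\omega)-\lambda$, which are bounded since $q$ is bounded, say $\|q(i\omega)\| \le C$. Combining these gives a constant $K = K(N,C,|\lambda|,d)$, independent of $\omega$, with $\|r(i\omega)\| \le K$. Consequently $r$ is essentially bounded, $A_r$ is a bounded inverse of $A_q - \lambda$, and $\lambda \in \varrho(A_q)$.

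Putting the two inclusions together yields the claimed identity $\sigma(A_q) = \overline{\bigcup_{\omega} \sigma(q(i\omega))}$. Beyond the uniform resolvent estimate I would expect the only remaining points to be routine bookkeeping: checking that $r$ is measurable (immediate from continuity of $q$ together with continuity of the adjugate and determinant) and that the normalization of the Lebesgue measure plays no role in the spectrum.
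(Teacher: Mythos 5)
The paper itself contains no proof of Proposition~\ref{thm:multop}; it is quoted directly from \cite[Prop.~1]{holderrieth1991matrix}, so there is no in-paper argument to compare against. Your proof is correct and is essentially the standard (Holderrieth-style) argument for this result: the localized approximate eigenfunctions $f_\delta(i\omega)=(2\delta)^{-1/2}\chi_{\{|\omega-\omega_0|<\delta\}}(\omega)\,v$ correctly give $\overline{\bigcup_{\omega}\sigma(q(i\omega))}\subseteq\sigma_{\mathrm{app}}(A_q)\subseteq\sigma(A_q)$, and your adjugate/determinant estimate $\|(q(i\omega)-\lambda)^{-1}\|\le N!\,(C+|\lambda|)^{N-1}/d^N$ is exactly the right device to get a bound on the pointwise resolvent that is uniform in $\omega$ despite non-normality, so that multiplication by $(q(\cdot)-\lambda)^{-1}$ is a bounded two-sided inverse of $A_q-\lambda$. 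The two inclusions together yield the identity, so the proof is complete as written.
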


Let now $q \in \mathcal{H}^{\infty}(\C_{- \delta};\C^{N \times N})$, where $\delta>0$,
and let  $Y=\C^N$.
Let $A_q$ be the multiplication operator on $L^2(i\R;Y)$, defined as above.
Note that $A_q\in \mathcal{L}(L^2(i\R;Y))$ since $q$ is bounded.
Let $\tilde{A}_q\in\mathcal{L}(\mathcal{H}^2(Y))$ be the operator of multiplication by $q$
on $\mathcal{H}^2(Y)$, i.e.
$$
(\tilde{A}_qf)(s)=q(s)f(s),\quad s \in \C_0,
$$
for $f\in\mathcal{H}^2(Y)$. Then the following result holds.

\begin{lemma}\label{lem:multcom}
  $\sigma_{\mathrm{app}}(\tilde{A}_q) \subseteq \sigma(A_q)$.
\end{lemma}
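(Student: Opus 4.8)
The plan is to realize $\tilde{A}_q$ as (a scalar multiple of) the restriction of $A_q$ to the closed subspace of $L^2(i\R;Y)$ consisting of boundary values of $\mathcal{H}^2(Y)$-functions, and then to transfer approximate eigenvectors from that subspace to the whole space. First I would introduce the boundary-value map $\iota\colon\mathcal{H}^2(Y)\to L^2(i\R;Y)$, $\iota f=\tilde f$, which by Lemma \ref{lemhut} is linear, injective and satisfies $\|\iota f\|_{L^2}^2=2\pi\|f\|_2^2$; in particular $\iota$ is bounded below.

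The key step is the intertwining identity $A_q\iota=\iota\tilde{A}_q$, equivalently $\widetilde{qf}(i\omega)=q(i\omega)\tilde f(i\omega)$ for almost every $\omega$. To establish it, note that $qf$ is holomorphic on $\C_0$ and, with $c:=\sup_{\mathrm{Re}(s)>-\delta}\|q(s)\|<\infty$, satisfies $\|q(x+i\omega)f(x+i\omega)\|\le c\,\|f(x+i\omega)\|$, so that $qf\in\mathcal{H}^2(Y)$. Hence $qf$ has a boundary function $\widetilde{qf}$ with $(qf)(x+i\omega)\to\widetilde{qf}(i\omega)$ as $x\searrow0$ for almost every $\omega$, by Lemma \ref{lemhut}. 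On the other hand, since $q$ is holomorphic on $\C_{-\delta}$ with $\delta>0$, it is continuous across the imaginary axis and $q(x+i\omega)\to q(i\omega)$ for every $\omega$, while $f(x+i\omega)\to\tilde f(i\omega)$ for almost every $\omega$; letting $x\searrow0$ in the identity $(qf)(x+i\omega)=q(x+i\omega)f(x+i\omega)$ then yields $\widetilde{qf}(i\omega)=q(i\omega)\tilde f(i\omega)$ almost everywhere, which is exactly $\iota\tilde{A}_qf=A_q\iota f$.

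Finally I would transfer the approximate point spectrum. Let $\lambda\in\sigma_{\mathrm{app}}(\tilde{A}_q)$ and choose $f_n\in\mathcal{H}^2(Y)$ with $\|f_n\|_2=1$ and $\|(\tilde{A}_q-\lambda)f_n\|_2\to0$. Applying $\iota$ and using the intertwining identity gives $(A_q-\lambda)\iota f_n=\iota(\tilde{A}_q-\lambda)f_n$, so that $\|\iota f_n\|_{L^2}=\sqrt{2\pi}$ while $\|(A_q-\lambda)\iota f_n\|_{L^2}=\sqrt{2\pi}\,\|(\tilde{A}_q-\lambda)f_n\|_2\to0$. Normalizing $g_n:=\iota f_n/\|\iota f_n\|_{L^2}$ produces unit vectors in $L^2(i\R;Y)$ with $(A_q-\lambda)g_n\to0$, whence $\lambda\in\sigma_{\mathrm{app}}(A_q)\subseteq\sigma(A_q)$. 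This proves $\sigma_{\mathrm{app}}(\tilde{A}_q)\subseteq\sigma(A_q)$.

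The only genuinely delicate point is the intertwining step: justifying that the boundary function of the product $qf$ coincides with the pointwise product of the individual boundary values. This is precisely where the hypothesis $\delta>0$ is used, since it guarantees that $q$ extends holomorphically (hence continuously) across the imaginary axis so that $q(x+i\omega)\to q(i\omega)$ pointwise, and it is combined with the almost-everywhere radial-limit statement of Lemma \ref{lemhut} for $f$. Everything else reduces to the elementary fact that an operator restricted to a closed invariant subspace cannot enlarge the set of points at which it fails to be bounded below.
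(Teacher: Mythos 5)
Your proposal is correct and follows essentially the same route as the paper: the boundary-value map of Lemma \ref{lemhut} (your $\iota$, the paper's $j$), the intertwining identity $A_q\iota=\iota\tilde{A}_q$, and the transfer of a normalized approximate-eigenvector sequence, yielding $\lambda\in\sigma_{\mathrm{app}}(A_q)\subseteq\sigma(A_q)$. Your treatment of the intertwining step is in fact slightly more careful than the paper's, which states the a.e.\ identity $\lim_{r\searrow0}q(r+i\omega)f(r+i\omega)=(j\tilde{A}_qf)(i\omega)$ without explicitly noting that $qf\in\mathcal{H}^2(Y)$ and that $q$ extends continuously across the imaginary axis.
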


\begin{proof}
  Consider the mapping $j:\mathcal{H}^2(Y) \rightarrow L^2(i\R;Y),~ jf=\tilde{f}$
  introduced in Lemma \ref{lemhut}.
  Note that $\|jf\|_{L^2(i\R;Y)}=\sqrt{2\pi}\|f\|_2$.
  The diagram
\begin{figure}[H]
	\begin{center}
	\includegraphics[ width=.4\textwidth]{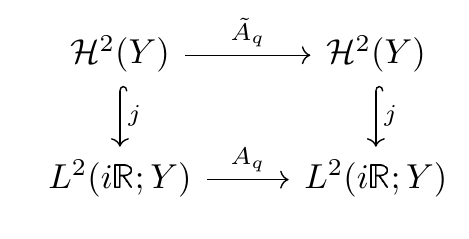}
	\label{fig:Aufg03_1}
	\end{center}
\end{figure}

is commutative since  for $f \in \mathcal{H}^2(Y)$
  \begin{align*}
    (A_q \tilde{f} )(i \omega)&=q(i\omega)\lim\limits_{r \searrow 0}f(r+i\omega)\\
    &= \lim\limits_{r \searrow 0}q(r+i\omega)f(r+i\omega)\\
    &=(j \tilde{A}_qf )(i \omega).
  \end{align*}
  Let $\lambda \in \sigma_{\mathrm{app}}(\tilde{A}_q)$.
  Then there exists a sequence $\left(f_n\right)_{n \in \mathbb{N}} \subseteq  \mathcal{H}^2(Y)$
  such that $\|f_n\|_2=1$ for $n \in \mathbb{N}$ and
  $\|(\lambda-\tilde{A}_q)f_n\|_2 \rightarrow 0$ for $n \to \infty$.
  Then $g_n=\frac{1}{\sqrt{2 \pi}}jf_n$
  satisfies  $\|g_n\|_{L^2(i\R;Y)}=1$
  and
  \begin{align*}
    \|(\lambda-A_q)g_n\|_{ L^2(i\R;Y)}
    &= \frac{1}{\sqrt{2 \pi}} \|(\lambda-A_q)j f_n\|_{ L^2(i\R;Y)}\\
    &= \frac{1}{\sqrt{2 \pi}}\|j \lambda f_n-j \tilde{A}_q f_n\|_{ L^2(i\R;Y)}\\
    & = \|\lambda f_n-\tilde{A}_qf_n \|_2 \overset{n \rightarrow \infty}{\longrightarrow} 0.
  \end{align*}
  Hence $\lambda\in \sigma_{\mathrm{app}}(A_q)\subseteq \sigma(A_q)$.
\end{proof}

\section{On well-posed  and regular systems}\label{sec:wpandregsystems}

In this section we provide a brief review of well-posed linear as well as regular linear systems. For more results we refer the reader to \cite{staffans2005well,tucsnak2014well}.
Let $Z$ be a  Banach space. For two functions $u,v \in L^2([0,\infty);Z)$ we define the $\tau$-concatenation $\diamond$ by
\[
u \underset{\tau}{\diamond}v=\mathbf{P}_{[0,\tau]}u+\mathbf{S}_{\tau}v,
\]  
where $\mathbf{P}_{[0,\tau]}$ denotes the truncation  of the function $u \in L^2([0,\infty);Z)$ to the interval $[0,\tau]$ while $\mathbf{S}_{\tau}$ denotes the operator of  right shift by $\tau$.
\begin{defi}[Well-posed linear system]\label{def:wps}
Let $X,U$ and $Y$ be Banach spaces. A {\em well-posed linear system} $\Sigma=\left( \mathbb{T},\Phi,\Psi,\mathbb{L} \right)$ on $(X,U,Y)$ is a  family of operators, where 
$\mathbb{T}=\left( T(t) \right)_{t \ge 0}$  is a $C_0$-semigroup on $X$,
$\Phi=(\Phi_t)_{t \ge0}$ with  $\Phi_t \in \mathcal{L}(L^2([0,\infty);U),X)$ such that 
$$
\Phi_{\tau+t}(u \underset{\tau}{\diamond} v)=T(t)\Phi_{\tau}u+\Phi_tv 
$$
for $u,v \in L^2([0,\infty);U), t, \tau\ge 0$,
$\Psi=(  \Psi_t )_{t \ge0}$ with $ \Psi_t\in \mathcal{L}(X,L^2([0,\infty);Y))$ such that
$$
\Psi_{\tau+t}x=\Psi_{\tau}x \underset{\tau}{\diamond} \Psi_tT(\tau)x
$$
for $x \in X, t,\tau \ge 0, \Psi_0=0$, and \\
$\mathbb{L}=( \mathbb{L}_t )_{t\ge0}$ with
$\mathbb{L}_t \in  \mathcal{L}(L^2([0,\infty);U), L^2([0,\infty);Y))$
satisfying 
$$
\mathbb{L}_{t+\tau}(u \underset{\tau}{\diamond} v)=\mathbb{L}_{\tau}u \underset{\tau}{\diamond}(\Psi_t\Phi_{\tau}u+\mathbb{L}_tv)
$$
for $u,v \in L^2([0,\infty);U), t, \tau \ge 0, \mathbb{L}_0=0$.
\end{defi}
We  call $X$ the {\em state space}, $U$ the {\em input space}  and $Y$ the {\em output  space} of $\Sigma$. The operators $\Phi_t$ are called {\em input operators}, the operators $\Psi_t$ are called {\em output operators} whereas the operators $\mathbb{L}_t$ are called {\em  input-output operators}.
Given an initial state $x_0 \in X$ and an input $u \in L^2_{\mathrm{loc}}([0,\infty);U)$ the state and output trajectories  $x:[0,\infty) \rightarrow X$ and $y:[0,\infty) \rightarrow Y$ of $\Sigma$ are defined by
\begin{equation}\label{wps_matrix}
\begin{bmatrix}
x(t)\\
\mathbf{P}_{[0,t]}y
\end{bmatrix}=
\Sigma_t
\begin{bmatrix}
x_0\\
\mathbf{P}_{[0,t]}u
\end{bmatrix}
\end{equation}
with 
\[
\Sigma_t=\begin{bmatrix}
T(t)&\Phi_t\\
\Psi_t&\mathbb{L}_t
\end{bmatrix}.
\]
Let $X_1$ be   the space $D(A)$ equipped with  norm $\|x\|_1=\|(\beta I-A)x\|$, where $A$ is the generator of the $C_0$-semigroup $(T(t))_{t\ge 0}$ and  $\beta \in \varrho(A)$ fixed.  The space $X_{-1}$ denotes  the  completion of X with respect to the norm $\|x\|_{-1}=\|(\beta I-A)^{-1} x\|$. Then $X_1\subseteq X \subseteq X_{-1}$ are continuous dense embedded. Note that different $\beta$ yield equivalent norms $\|\cdot\|_1$ and $\|\cdot\|_{-1}$. 
Each operator $T(t)$, $t\ge 0$, can be uniquely  extended to a linear bounded operator on $X_{-1}$. We denote this extension by $T_{-1}(t)$ and we note that  $\mathbb{T}_{-1}=\left( T_{-1}(t) \right)_{t \ge 0}$ is  a $C_0$-semigroup on $X_{-1}$.
The generator $A_{-1}$ of $\mathbb{T}_{-1}$  has domain   $D(A_{-1})=X$ and is  an extension of $A$ to $X_{-1}$.\\
For a well-posed linear  system $\Sigma$ there exists a unique  operator $B \in  \mathcal{L}(U,X_{-1})$, the {\em control operator} of $\Sigma$, such that for every  $t \ge 0$ the operator  $\Phi_t$ can be represented via
\begin{equation}\label{eqn:int_phi_t}
\Phi_{t} u=\int_0^{t} T_{-1}(t-s)Bu(s) ds.
\end{equation}
We remark that the integration in $\eqref{eqn:int_phi_t}$ is in $X_{-1}$, but the value of the  integral is an element of $X$.
Moreover, we obtain the existence of a unique operator $\Psi_{\infty}:X \rightarrow L^2_{\rm{loc}}([0,\infty);Y)$, the {\em extended output operator}, with $\mathbf{P}_{[0,t]} \Psi_{\infty}=\Psi_t$ for every $t \ge 0$.
It can be shown that there exists a unique  operator $C \in \mathcal{L}(X_1,Y)$ such that 
\begin{align*}
(\Psi_{\infty}x_0)(t)=CT(t)x_0, \qquad x_0\in X_1.
\end{align*}
Similarly, there exists a uniquely determined  operator  $\mathbb{L}_{\infty}: L^2_{\rm{loc}}([0,\infty);U) \rightarrow  L^2_{\rm loc}([0,\infty);Y)$, the {\em extended input-output operator}, such that $\mathbf{P}_{[0,t]}\mathbb{L}_{\infty}=\mathbb{L}_t\mathbf{P}_{[0,t]}$ for $t \ge 0$.
Using the Laplace transform we are able to represent $\mathbb{L}_{\infty}$ by the {\em transfer function} $\mathbf{G}$ of  $\Sigma$.  Let $\omega > \omega_0(\mathbb{T})$, where $\omega_0(\mathbb{T})$ denotes the growth bound of the $C_0$-semigroup $\mathbb{T}$ and $L^2_{\omega}([0,\infty);U)=e_\omega L^2([0,\infty);U)$ with $(e_\omega )v(t)=e^{\omega t}v(t)$ and $\|e_\omega v\|_{L^2_\omega}=\|v\|_{L^2}$.
$\mathbf{G}$ is a bounded analytic $\mathcal{L}(U,Y)$-valued   function on $\C_{\omega}$. For $s \in \C_{\omega_0(\mathbb{T})} $ and $u \in L^2_{\omega}([0,\infty);U)$ the Laplace-integral of $\mathbb{L}_{\infty}u$ at $s$ converges absolutely and 
\begin{equation*}
( \widehat{\mathbb{L}_{\infty}u} )(s)=\mathbf{G}(s)\hat{u}(s)
\end{equation*}
for $Re(s) > \omega$.
Furthermore, for $\alpha,\beta \in \C_{\omega_0({T})}$ the transfer function $\mathbf{G}$ satisfies  
\begin{align*}
&\mathbf{G}(\alpha)-\mathbf{G}(\beta)\\
&=(\beta-\alpha)C(\beta I-A_{-1})^{-1}(\alpha I-A_{-1})^{-1}B\\
&=C \left( (\alpha I-A_{-1})^{-1}-(\beta I -A_{-1})^{-1} \right) B.
\end{align*}
If the $C_0$-semigroup $(T(t))_{t\ge 0}$ is exponentially stable, i.e. $\omega_0(\mathbb{T}) <0$, then  $\Psi_\infty\in \mathcal{L}(X,L^2([0,\infty);Y))$ and $\mathbb{L}_{\infty}\in \mathcal{L}(L^2([0,\infty);U),L^2([0,\infty);Y))$ with
\begin{equation}\label{linfinitynorm}
\|\mathbb{L}_{\infty}\|=\sup_{\omega \in\R} \|\mathbf{G}(i \omega)\|
\end{equation}
using  the Paley-Wiener theorem  and the maximum modulus principle.

\begin{rem}\label{phiinL2}
Under the assumption that the $C_0$-semigroup of the well-posed linear system is exponentially stable, the  operator\\
$\Phi_\infty: L^2_{\rm{loc}}([0,\infty);U) \rightarrow  L^2_{\rm loc}([0,\infty);X)$, defined by 
\[ (\Phi_\infty u)(t) :=\Phi_t u,\]
satisfies 
\[
\Phi_\infty\in \mathcal{L}(L^2([0,\infty);U),L^2([0,\infty);X)).
\]
Indeed, considering $\Sigma(\mathbb{T},\Phi,\Psi,\mathbb{L})$ with an exponentially stable $C_0$-semigroup it is, as mentioned above, true that $\mathbb{L}_{\infty} \in \mathcal{L}(L^2([0,\infty),U),L^2([0,\infty),Y))$.
    
Defining the new well-posed system
$\widetilde{\Sigma}=\left(\mathbb{T},\Phi,\widetilde{\Psi},\widetilde{\mathbb{L}}  \right)$
with  $\mathbb{T}$ and $\Phi$ as in the original system,
$\Psi $ replaced  by
$$
(\widetilde{\Psi}_t x)(\tau):= \mathbb{1}_{[0,t]}(\tau)T(\tau)x,
$$
and  $\mathbb{L}$ replaced by
$$
( \widetilde{\mathbb{L}}_t u)(\tau):=\mathbb{1}_{[0,t]}(\tau)\Phi_{\tau}u,
$$
 we get
$\Phi_\infty=\widetilde{\mathbb{L}}_{\infty} \in \mathcal{L}(L^2([0,\infty),U),L^2([0,\infty),X))$. 
\end{rem}

A  well-posed linear system together with a feedback law $u=Ky+v$, where  $K\in\mathcal{L}(Y,U)$, defines a closed loop system $\Sigma^K$, see Figure \ref{feedback}.
\begin{center}
\begin{figure}[H]
    \begin{center}
        \begin{tikzpicture}[auto, node distance=1.8cm,>=latex']
         \node [input, name=input] {};
        \node [input, right of=input](help){};
        \node [block, right of =help] (controller) {$\Sigma$};
        \node [output, right of=controller] (output) {};
        \node [block, below of=controller] (feedback) {$K$};
        \draw [->] (input) |-  node{~~~~~~~~$v$}(help);
        \draw [->] (help) -- node{$u$}(controller);
        \draw [->] (controller) -- node [name=y] {$\hspace{1.5cm}y$}(output);
        \draw [->] (y) |- (feedback);
        \draw [->] (feedback) -| node[pos=0.99] {} 
        node [near end] {} (help);
    \end{tikzpicture}
\caption{Closed-loop system $\Sigma^K$}\label{feedback}
    \end{center}
\end{figure}
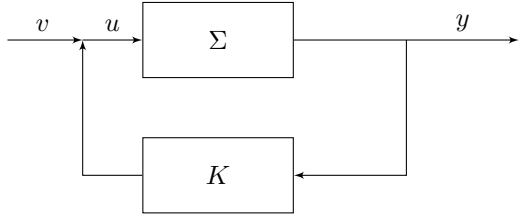
\end{center}
$K $ is called {\em admissible} if the closed-loop system   $\Sigma^K$ is also well-posed.
If $K$ is  admissible, the closed-loop system
$$
\Sigma_t^K=
\begin{bmatrix}
T^K(t)&\Phi_t^K\\
\Psi_t^K&\mathbb{L}_t^K
\end{bmatrix}
$$
 fulfills (see \cite{weiss1994regular})
\begin{equation}\label{zusammnhangfeedbackmirurspruenglichemsystem}
\Sigma^K_t-\Sigma_t=
\Sigma_t
\begin{bmatrix}
0&0\\
0&K
\end{bmatrix}
\Sigma^K_t, \qquad t \ge 0.
\end{equation}
For  explicit formulas  for  $\mathbb{T}^K,\Phi^K,\Psi^K$ and $\mathbb{L}^K$ we refer to  \cite[Thm.~7.1.2.]{staffans2005well}.

We note that 
if the semigroup of $\Sigma$ is exponentially stable and  the operator $I-\mathbb{L}_{\infty}K$ is
bijective on  $L^2([0,\infty);Y)$ then $K$ is admissible  (\cite[$\mathrm{Thm.}7.1.8$]{staffans2005well}).
Here, the operator $K$ in $I-\mathbb{L}_{\infty}K$ acts as
the multiplication operator induced by $K$.

A special case of well-posed linear systems are {\em regular linear systems}.
We call a well-posed linear system {\em regular}, if for every $v \in U$ the limit 
$$
Dv:= \lim_{\tau \rightarrow 0} \frac{1}{\tau} \int_0^\tau  \left(\mathbb{L}_{\infty}(\mathbb{1}_{[0,\infty)}v)\right)(s)ds
$$
  exists.  Then  $D \in \mathcal{L}(U,Y)$ and this operator   is called the {\em feedthrough operator} of the system  $\Sigma$.
  
For a well-posed linear system, we define the {\em Lebesgue extension} $C_L$ of $C \in \mathcal{L}(X_1,Y)$ by
$$
C_Lx=\lim_{\tau \searrow 0}C\frac{1}{\tau}\int_0^{\tau} T(s)xds
$$ 
with domain $D(C_L)$ defined as the 
set of all $x\in X$ for which the limit exists in $Y$. We note that $D(C)\subseteq D(C_L)$.
\begin{theorem}[{\cite[Thm.~5.6.5]{staffans2005well}}]
If $\Sigma$ is regular, then the output of $\Sigma$ is given by
\begin{equation}\label{eqn:outputregular}
y(t)=C_Lx(t)+Du(t).
\end{equation}
\end{theorem}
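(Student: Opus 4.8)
The plan is to prove the pointwise formula by superposition. Writing the output as $y=\Psi_\infty x_0+\mathbb{L}_\infty u$ and the state as $x(t)=T(t)x_0+\Phi_t u$, I would establish the two halves $(\Psi_\infty x_0)(t)=C_LT(t)x_0$ and $(\mathbb{L}_\infty u)(t)=C_L\Phi_t u+Du(t)$ and add them. Everything rests on a single averaging identity combined with the Lebesgue differentiation theorem. First I would record that for every $z\in X$ and $\tau>0$,
$$\frac{1}{\tau}\int_0^\tau(\Psi_\infty z)(\sigma)\,d\sigma=C\,\frac{1}{\tau}\int_0^\tau T(\sigma)z\,d\sigma.$$
On the dense subspace $X_1$ this is immediate from $(\Psi_\infty z)(\sigma)=CT(\sigma)z$; both sides are continuous in $z\in X$ (the left because $\Psi_\infty$ maps $X$ into $L^2_{\mathrm{loc}}$ followed by the bounded averaging functional, the right because $z\mapsto\frac{1}{\tau}\int_0^\tau T(\sigma)z\,d\sigma$ maps $X$ continuously into $X_1=D(C)$), so the identity extends to all of $X$ by density.

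For the state part I would use the output shift property $(\Psi_\infty x_0)(t+\sigma)=(\Psi_\infty T(t)x_0)(\sigma)$, which follows from the concatenation axiom for $\Psi$ in Definition \ref{def:wps}. Averaging over $\sigma\in[0,\tau]$ and applying the identity above to $z=T(t)x_0$ yields $\frac{1}{\tau}\int_0^\tau(\Psi_\infty x_0)(t+\sigma)\,d\sigma=C\frac{1}{\tau}\int_0^\tau T(t+\sigma)x_0\,d\sigma$. As $\tau\searrow0$ the left side tends to $(\Psi_\infty x_0)(t)$ at every Lebesgue point of $\Psi_\infty x_0$, hence for almost every $t$; the right side is exactly the defining average of $C_L$ at $T(t)x_0$. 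Thus the right-hand limit exists, $T(t)x_0\in D(C_L)$, and $(\Psi_\infty x_0)(t)=C_LT(t)x_0$ almost everywhere.

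For the full statement I would run the same averaging on the flow property of the whole system,
$$y(t+\sigma)=(\Psi_\infty x(t))(\sigma)+(\mathbb{L}_\infty u_t)(\sigma),\qquad u_t(\sigma):=u(t+\sigma),$$
where $x(t)=T(t)x_0+\Phi_t u$; this is a consequence of the concatenation axioms for $\Phi,\Psi,\mathbb{L}$. Averaging over $[0,\tau]$: the left side tends to $y(t)$ for a.e.\ $t$; the first term on the right tends to $C_Lx(t)$ exactly as before (which simultaneously forces $x(t)\in D(C_L)$); and the second term I would compare with the constant input $\mathbb{1}_{[0,\infty)}u(t)$, whose average tends to $Du(t)$ by the very definition of regularity.

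The main obstacle is this last feedthrough term: I must show that $\frac{1}{\tau}\int_0^\tau\bigl(\mathbb{L}_\infty[\,u_t-\mathbb{1}_{[0,\infty)}u(t)\,]\bigr)(\sigma)\,d\sigma\to0$ for a.e.\ $t$. Using $\mathbf{P}_{[0,\tau]}\mathbb{L}_\infty=\mathbb{L}_\tau\mathbf{P}_{[0,\tau]}$ and Cauchy--Schwarz, this average is bounded by $\tfrac{1}{\sqrt\tau}\|\mathbb{L}_\tau\|\,\|u_t-\mathbb{1}_{[0,\infty)}u(t)\|_{L^2([0,\tau];U)}$; since $\|u_t-\mathbb{1}_{[0,\infty)}u(t)\|_{L^2([0,\tau];U)}^2=\int_0^\tau\|u(t+\sigma)-u(t)\|^2\,d\sigma=o(\tau)$ at Lebesgue points of $u$, and $\|\mathbb{L}_\tau\|$ stays bounded as $\tau\searrow0$ by causality, the whole expression is $o(1)$. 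Restricting to the full-measure set of common Lebesgue points of $u$ and $y$ then gives $y(t)=C_Lx(t)+Du(t)$ almost everywhere, as claimed.
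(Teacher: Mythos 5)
The paper does not prove this statement at all: it is imported verbatim from the literature, cited as \cite[Thm.~5.6.5]{staffans2005well}, so there is no in-paper proof to compare against. Your argument is correct and is, in substance, the classical proof of Weiss (also underlying Staffans' Theorem 5.6.5): the cocycle identity $y(t+\sigma)=(\Psi_\infty x(t))(\sigma)+(\mathbb{L}_\infty u_t)(\sigma)$, the averaging identity extended from $X_1$ to $X$ by density, comparison of $u_t$ with the frozen input $\mathbb{1}_{[0,\infty)}u(t)$ at $L^2$-Lebesgue points of $u$, and the uniform bound on $\|\mathbb{L}_\tau\|$ via causality are exactly the right ingredients, and your logical order (existence of the limit defining $C_Lx(t)$ is forced by the convergence of the other terms) is sound. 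The only caveat worth making explicit is that the conclusion holds for almost every $t$ (common Lebesgue points of $u$ and $y$), which is also how the cited theorem is meant, since $y$ is only an $L^2_{\mathrm{loc}}$ equivalence class.
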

Thus a regular linear system with generating operators $A,B,C$ and  $D$ is completely determined by 
\begin{align*}
\dot{x}(t) &=A_{-1}x(t)+Bu(t)\\
y(t)&=C_Lx(t)+Du(t)
\end{align*}
Accordingly, we also denote the system by $\Sigma=\left(A,B,C,D\right)$.
For $\alpha \in \C_{\omega_0({T})}$ the transfer function $\mathbf{G}$ satisfies  
\begin{align*}
\mathbf{G}(\alpha)&=C_L(\alpha I-A_{-1})^{-1}B+D,
\end{align*}
in particular $(\alpha I-A)^{-1}Bu \in D(C_L)$ for $u \in U$.

As in the well-posed case we are interested in the closed loop system generated by an admissible feedback operator $K \in \mathcal{L}(Y,U)$. If $K$ is admissible, then the semigroup  corresponding to the closed-loop system is generated by $A^K: D(A^K)\subseteq X\rightarrow X$ with
$$
A^Kx=\left( A_{-1}+BK(I-DK)^{-1}C_L  \right)x,~ x\in D(A^K),
$$
\begin{align*}
D(A^K)=&\{x \in D(C_L) \mid  \\
& (A_{-1}+BK(I-DK)^{-1}C_L)x \in X\}.
\end{align*}

\section{Stability radius for well-posed  systems}\label{sec:stabilityradiuswellposed}

Let $X_1, \ldots, X_N$, $U_1, \ldots, U_N$,  $Y_1, \ldots,Y_N$ be Hilbert spaces. Furthermore, we assume that $\Sigma_i=( \mathbb{T}_i,\Phi_i,\Psi_i, \mathbb{L}_i )$, $i=1,\ldots,N$, are well-posed, exponentially stable  linear systems, i.e. for $i=1,\ldots,N$, $\Sigma_i$ is a well-posed linear system and $\mathbb{T}_i$ is exponentially stable.  We will now investigate the stability of interconnections of these systems. 
Let   
\begin{equation}\label{eqn:sum}
 X=\bigoplus_{i=1}^N X_i, \quad  Y=\bigoplus_{i=1}^N Y_i  \quad \mbox{ and }\quad  U=\bigoplus_{i=1}^N U_i
\end{equation}
 equipped with the norms 
\[ \|x\|^2=\sum_{i=1}^N \|x_i\|^2,~ \|y\|^2=\sum_{i=1}^N \|y_i\|^2, ~  \|u\|=\max_{i=1}^N\|u_i\|. \]
It is easy to see that $\Sigma:=(\mathbb{T},\Phi,\Psi,\mathbb L)$, defined by\\
 $(\diag(\mathbb{T}_i), \diag(\Phi_i),\diag(\Psi_i),\diag(\mathbb L_i))$ is an exponentially stable  well-posed linear system as well.\\
We consider interconnections of the form  
$$
u_i(t)=\sum_{j=1}^N \Delta_{ij}e_{ij}y_j(t).
$$
with $\Delta_{ij}\in\mathcal{L}\left( Y_j, U_i \right)$ and $e_{ij} \ge 0$.
Setting $\mathcal{E}=\big(e_{ij}\big)_{i,j=1,\ldots,N}$,
$\Delta=\left( \Delta_{ij} \right)_{ij}\in \mathcal{L}(Y,U)$,
$u=\left[\begin{smallmatrix}u_1\\\vdots\\u_N \end{smallmatrix}\right]$ and
$y=\left[\begin{smallmatrix} y_1\\\vdots\\y_N \end{smallmatrix}\right]$
this leads to  $u=\left(\Delta \circ \mathcal{E}  \right)y$ and hence to   the closed-loop system  $\Sigma^{\Delta \circ \mathcal{E}}$ as described in Figure \ref{feedback} with $K=\Delta \circ \mathcal{E}$.\\
The matrix $\mathcal{E}$ describes the structure and the strength of the interconnection of the systems. More precisely, the entry $e_{ij}$ of  $\mathcal{E}$ can be interpreted as the strength of the  connection of the output of system $\Sigma_j$ to the input of system $\Sigma_i$. If $e_{ij}=0$, then the output of $\Sigma_j$ does not influence $\Sigma_i$.
\begin{example}[construction of $\mathcal{E}$]
Consider four subsystems with an interconnection structure given by Figure \ref{exampl}.
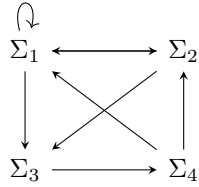
\begin{figure}[H]
\begin{center}
\hspace{-.4cm}
\begin{tikzpicture}
\matrix (m) [matrix of math nodes,row sep=3em,column sep=4em,minimum width=2em]
  {
     \Sigma_1 & \Sigma_2 \\
     \Sigma_3 & \Sigma_4 \\};
  \path[-stealth]
    (m-1-1) edge node [left] {} (m-2-1)
            edge  node [below] {} (m-1-2)
    (m-2-1.east|-m-2-2) edge node [below] {}
            node [above] {} (m-2-2)
    (m-1-2) edge [] (m-2-1)
            edge [] (m-1-1)
    (m-1-1) edge [loop above] (m-1-1)
    (m-2-2) edge node[above]{} (m-1-2)
    (m-2-2) edge node [above]{} (m-1-1);
\end{tikzpicture}
\caption{Example of an interconnection structure}\label{exampl}
\end{center}
\end{figure}
Then the matrix $\mathcal{E}$ is given by 
$$
 \mathcal{E}=\begin{bmatrix}
e_{11}&e_{12}&0&e_{14}\\
e_{21}&0&0&e_{24}\\
e_{31}&e_{32}&0&0\\
0&0&e_{43}&0
\end{bmatrix}.
$$
\end{example}

When determining the stability of the interconnected systems we regard
$\mathcal{E}$ and $\Sigma_i,~i=1,\ldots,N$ as fixed, while $\Delta $
is unknown.  Clearly, for $\Delta=0$ the system
$\Sigma^{\Delta \circ \mathcal{E}}$ is exponentially stable and one may expect that this
remains true if $\Delta$ is small.

We are interested in the largest $r$ such that
$\Sigma^{\Delta \circ \mathcal{E}}$ is exponentially stable for all $\Delta$ 
of norm smaller than $r$.  This leads to the concept of the stability radius;
for finite-dimensional interconnected systems it
was introduced by Hinrichsen and
Pritchard in \cite{hinrichsen2009composite}.

We consider the stability radius for
two different norms of $\Delta$:
The first is the operator norm $\|\Delta\|$, which is induced by
the chosen norms on $U$ and $Y$
and satisfies
\[\|\Delta\|=\max_{i=1}^N\|\Delta_i\|,\]
where $\|\Delta_i\|$ is the operator norm of the $i$th row
$\Delta_i=[\Delta_{i1},\dots,\Delta_{iN}]\in\mathcal{L}(Y,U_i)$ of $\Delta$.
The second norm is given by
\begin{equation}\label{norm:delta}
  \|\Delta\|_{2,\infty}=
  \max_{i=1}^N \left( \sum_{j=1}^N \|\Delta_{ij}\|^2 \right)^{\frac{1}{2}}.
\end{equation}
Note that both norms are related by
$\|\Delta\|\leq\|\Delta\|_{2,\infty}$.

\begin{defi}\label{def:stabilityradiuswps}
Let 
$$
\Sigma= (\diag({\mathbb{T}}_i), \diag(\Phi_i),\diag(\Psi_i),\diag(\mathbb L_i))
$$
be a well-posed, exponentially stable linear system and $\mathcal{E}=\left( e_{ij} \right)\in \R_{\ge 0}^{N \times N}$.
Then the {\em stability radius} $r(\Sigma,\mathcal{E})$ with respect to the operator norm
$\|\Delta\|$ is defined by
\begin{align*}
  r(\Sigma,\mathcal{E})=
  \sup &\{r>0 \mid \Delta \circ \mathcal{E} \text{ is admissible and}\\
  &\Sigma^{\Delta \circ \mathcal{E}}
  \text{ is exponentially stable }\\
  &\text{for all } \Delta \in \mathcal{L}(Y,U) \text{ with } \|\Delta\|<r\}.
\end{align*}
The stability radius
$r_{2,\infty}(\Sigma,\mathcal{E})$
is defined in the same way
with respect to the norm $\|\Delta\|_{2,\infty}$.
\end{defi}
It is easy to see that
\[r(\Sigma,\mathcal{E})\leq r_{2,\infty}(\Sigma,\mathcal{E}).\]

\begin{theorem}\label{thm:stabilityradiusforwps}
If $\Sigma= (\diag({\mathbb{T}}_i), \diag(\Phi_i), \diag(\Psi_i), \diag(\mathbb L_i))$ is a well-posed exponentially stable  linear   system  and $Y$ is finite-dimensional,  then the stability radii satisfy
\begin{align*}
r_{2,\infty}(\Sigma,\mathcal{E})\geq r(\Sigma,\mathcal{E})\ge\frac{1}{\Theta},
\end{align*}
where
$$
\Theta =\left[\sup\limits_{\omega \in \R}\rho\left(
\left[ \begin{smallmatrix}
\|\mathbf{G}_1(i\omega)\|^2&&&0\\
&\ddots&&\\0
&&&\|\mathbf{G}_N(i\omega)\|^2
\end{smallmatrix}\right]
\mathcal{E}^{\circ 2}
 \right)\right]^{\textstyle \frac{1}{2}}
$$
and $\mathbf{G}_k$ denotes the transfer function of $\Sigma_k$. 
\end{theorem}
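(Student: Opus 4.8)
The plan is to reduce exponential stability of the perturbed interconnection to the invertibility of a matrix multiplication operator and to control its spectrum by a Perron--Frobenius estimate. Since $r(\Sigma,\mathcal{E})\le r_{2,\infty}(\Sigma,\mathcal{E})$ always holds, it suffices to prove $r(\Sigma,\mathcal{E})\ge 1/\Theta$, i.e.\ that every $\Delta$ with $\|\Delta\|<1/\Theta$ yields an admissible feedback $\Delta\circ\mathcal{E}$ with exponentially stable closed loop. Throughout I set $q(s)=\mathbf{G}(s)(\Delta\circ\mathcal{E})$; because $\Sigma$ is exponentially stable, $\mathbf{G}$ is bounded and analytic on $\C_{-\delta}$ for some $\delta>0$, and since $\mathbf{G}=\diag(\mathbf{G}_1,\dots,\mathbf{G}_N)$ the $(i,j)$-block of $q(s)$ equals $e_{ij}\mathbf{G}_i(s)\Delta_{ij}$. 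As $Y$ is finite-dimensional, $q$ takes values in matrices on $Y=\C^{\dim Y}$.

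The heart of the argument is the pointwise bound
\[
\rho\bigl(q(s)\bigr)\le\|\Delta\|\,\sqrt{\rho\bigl(P(s)\bigr)},\qquad P(s)=\diag\bigl(\|\mathbf{G}_i(s)\|^2\bigr)\,\mathcal{E}^{\circ 2}\in\R_{\ge0}^{N\times N}.
\]
To prove it I would use that $\dim Y<\infty$, so $\rho(q(s))=|\lambda|$ is attained at an eigenvalue $\lambda$ with eigenvector $y\neq0$. Writing $w=(e_{ij}y_j)_j$ and estimating the $i$-th block row by Cauchy--Schwarz gives $\|(q(s)y)_i\|\le\|\mathbf{G}_i(s)\|\,\|\Delta_i\|\,(\sum_j e_{ij}^2\|y_j\|^2)^{1/2}$, hence with $z=(\|y_i\|^2)_i\neq0$ and $\|\Delta_i\|\le\|\Delta\|$ one obtains $|\lambda|^2 z_i=\|(q(s)y)_i\|^2\le\|\Delta\|^2\,(P(s)z)_i$ for all $i$. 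Thus $P(s)z\ge(|\lambda|^2/\|\Delta\|^2)z$ with $z\in\R_{\ge0}^N\setminus\{0\}$, and Lemma \ref{lemmaperronforbenius}(ii) yields $\rho(P(s))\ge\rho(q(s))^2/\|\Delta\|^2$, which is the claim. On the imaginary axis $\rho(P(i\omega))\le\Theta^2$ by definition of $\Theta$, so
\[
\sup_{\omega\in\R}\rho\bigl(q(i\omega)\bigr)\le\|\Delta\|\,\Theta<1.
\]

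Next I would settle invertibility on the whole half-plane, which is the genuinely delicate point: unlike the operator norm, the spectral radius does \emph{not} obey the maximum modulus principle, so the axis bound does not obviously control the interior. I resolve this by observing that $\log\rho(q(s))=\lim_n\tfrac1n\log\|q(s)^n\|$ is a decreasing limit of subharmonic functions (each $q(s)^n$ is analytic, so $\log\|q(s)^n\|$ is subharmonic), hence subharmonic; the maximum principle then gives $\sup_{s\in\C_0}\rho(q(s))=\sup_\omega\rho(q(i\omega))\le\|\Delta\|\Theta<1$. Combined with the uniform bound $\|q(s)\|\le\sup_\omega\|\mathbf{G}(i\omega)\|\,\|\Delta\circ\mathcal{E}\|$ from \eqref{linfinitynorm}, the matrices $q(s)$ lie in a compact set on which $1\notin\sigma(\cdot)$, so $M\mapsto\|(I-M)^{-1}\|$ is continuous and bounded there and $(I-q)^{-1}\in H^\infty(\C_0)$. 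By Theorem \ref{paleywienertheorem} multiplication by $(I-q)^{-1}$ is the bounded inverse of $I-\mathbb{L}_\infty(\Delta\circ\mathcal{E})$ on $L^2([0,\infty);Y)$, so $\Delta\circ\mathcal{E}$ is admissible by \cite[Thm.~7.1.8]{staffans2005well}; Proposition \ref{thm:multop} and Lemma \ref{lem:multcom} give an alternative, purely operator-theoretic route to the same invertibility.

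Finally, exponential stability follows by repeating the estimate on a shifted axis: since $\|\Delta\|\Theta<1$ strictly, continuity of $\delta\mapsto\sup_\omega\rho(P(-\delta+i\omega))$ lets me choose $0<\delta$ with $\sup_{\mathrm{Re}(s)=-\delta}\rho(q(s))<1$, whence subharmonicity and the compactness argument again yield $(I-q)^{-1}\in H^\infty(\C_{-\delta})$; boundedness of the closed-loop input–output map on the $(-\delta)$-weighted space forces growth bound $\le-\delta<0$. I expect the main obstacle to lie exactly in these last two passages: the subharmonicity argument that tames the spectral radius off the imaginary axis, and the systems-theoretic translation of frequency-domain invertibility on $\C_{-\delta}$ into genuine exponential decay of the closed-loop semigroup, together with the continuity in $\delta$ needed to preserve the strict inequality. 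The Perron--Frobenius estimate itself, by contrast, is purely algebraic and is the part I am most confident about.
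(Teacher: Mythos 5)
Your Perron--Frobenius step is exactly the paper's: the eigenvector estimate giving $\rho\bigl(\mathbf{G}(i\omega)(\Delta\circ\mathcal{E})\bigr)\le\|\Delta\|\,\Theta<1$ is identical. Your route from there to invertibility of $I-\mathbb{L}_\infty(\Delta\circ\mathcal{E})$ is different but workable: Vesentini-type subharmonicity of $\rho(q(\cdot))$ plus Phragm\'en--Lindel\"of (boundedness of $q$ on $\C_0$ is needed and available), then the compactness argument for a uniform bound on $\|(I-q(s))^{-1}\|$, yielding $(I-q)^{-1}\in H^\infty(\C_0;\mathcal{L}(Y))$ and hence a bounded inverse via the Paley--Wiener theorem. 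The paper avoids controlling $\rho(q(s))$ at interior points altogether: by Lemma \ref{lem:multcom} and Proposition \ref{thm:multop} (this is where $\dim Y<\infty$ enters) the approximate point spectrum of the Hardy-space multiplication operator $\tilde A_q$ lies in $\overline{\bigcup_\omega\sigma(q(i\omega))}\subseteq\overline{B_r(0)}$, $r=\|\Delta\|\Theta<1$, and since the boundary of the spectrum of a bounded operator is contained in the approximate point spectrum, $\sigma(\tilde A_q)\subseteq\overline{B_r(0)}$, so $1\in\varrho(\tilde A_q)$. Your potential-theoretic detour buys nothing here, but it is not wrong.

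The genuine gap is your final step. The assertion that ``boundedness of the closed-loop input--output map on the $(-\delta)$-weighted space forces growth bound $\le-\delta$'' is exactly the implication from external (input--output) stability to internal (semigroup) stability, and that implication is false in general for well-posed systems: nothing in a frequency-domain bound rules out closed-loop state dynamics that are unstable but unobservable/unreachable through the input--output behaviour. What closes this gap in the paper is a state-space argument that your proposal never invokes: by the feedback identity \eqref{zusammnhangfeedbackmirurspruenglichemsystem}, the closed-loop semigroup satisfies $T^{\Delta\circ\mathcal{E}}(t)x_0=T(t)x_0+\Phi_t\bigl((\Delta\circ\mathcal{E})y_{x_0}\bigr)$, where $y_{x_0}=(I-\mathbb{L}_\infty(\Delta\circ\mathcal{E}))^{-1}\Psi_\infty x_0$ obeys $\|y_{x_0}\|_{L^2}\le M\|x_0\|$ by the invertibility you established; combining this with the $L^2$-boundedness of $\Phi_\infty$ (Remark \ref{phiinL2}, itself proved by realizing $\Phi_\infty$ as the input--output map of an auxiliary well-posed system) and with Datko's theorem for the open-loop semigroup, one gets $\int_0^\infty\|T^{\Delta\circ\mathcal{E}}(t)x_0\|^2\,dt\le C\|x_0\|^2$, and Datko's theorem applied once more gives exponential stability of $T^{\Delta\circ\mathcal{E}}$. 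Note that this needs no decay rate, so your shifted-axis construction and the continuity-in-$\delta$ issue it raises can be dropped entirely; conversely, without some such state-space ingredient your argument proves only input--output stability of the closed loop, not the exponential stability required by the definition of $r(\Sigma,\mathcal{E})$.
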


\begin{rem}
  We use the convention $1/0=\infty$ here.
  In particular, if $\Theta=0$ then we obtain $r(\Sigma,\mathcal{E})=\infty$, i.e.,
  the system remains stable for every $\Delta\in\mathcal{L}(Y,U)$.
  We note that $\Theta=0$ holds if the graph corresponding to $\mathcal{E}$ does not
  contain any cycles \cite[Remark~4.6]{hinrichsen2009composite}.

\end{rem}

\begin{proof}[Proof of Theorem~\ref{thm:stabilityradiusforwps}]
  As a first step we show that for $\|\Delta \|<\frac{1}{\Theta}$ the operator
  $I-\mathbb{L}_\infty(\Delta \circ\mathcal{E})$  is invertible in
  $\mathcal{L}\left(L^2([0,\infty);Y)\right)$.
By the Paley-Wiener theorem \ref{paleywienertheorem}  this is equivalent to the fact that
$I-\mathbf{G}(\cdot)(\Delta \circ\mathcal{E})$, with $\mathbf{G}=\diag(\mathbf{G}_1,\ldots, \mathbf{G}_N)$ is  invertible in $\mathcal{L}\left( \mathcal{H}^2(Y)\right)$. Consider $\tilde{A}_q=\mathbf{G}(\cdot)(\Delta \circ \mathcal{E})$ as a multiplication  operator on $\mathcal{H}^2(Y)$ and $A_q=\mathbf{G}(\cdot )(\Delta \circ \mathcal{E})$ as a multiplication operator on $L^2(i\R;Y)$.
By Lemma \ref{lem:multcom}  $\sigma_{\mathrm{app}}( \tilde{A}_q ) \subseteq \sigma(A_q)$.
Using the assumption that $Y$ is finite-dimensional, we can then apply
Proposition \ref{thm:multop} to obtain
\[
\sigma_{\mathrm{app}}(\tilde{A}_q)\subseteq
\overline{\bigcup\limits_{\omega \in \R} \sigma(\mathbf{G}(i\omega)
  (\Delta \circ \mathcal{E}))}.
\]
To prove the invertibility of $I-\mathbf{G}(\cdot)(\Delta \circ \mathcal{E})$, it is now sufficient to show
\begin{equation}\label{specinball}
  \overline{ \bigcup_{\omega \in \R}\sigma\left(\mathbf{G}(i\omega)(\Delta \circ \mathcal{E})  \right)}
\subseteq \overline{B_r(0)}
\end{equation}
for some $r<1$, where  $B_r(0)$ denotes the ball around zero with radius $r$.
Indeed, since
$\tilde{A}_q$ is bounded and the boundary of the spectrum is contained in the approximate point spectrum,
this implies $\sigma(\tilde{A}_q)\subseteq\overline{B_r(0)}$
and hence $1\in\varrho(\tilde A_q)$.

So let $\lambda \in \bigcup\limits_{\omega \in \R}\sigma \left( \mathbf{G}(i \omega)(\Delta \circ \mathcal{E})  \right)$.
Then there exist $\omega \in \R$ and $y \neq 0$ such that 
$$
\lambda y = \mathbf{G}(i \omega)(\Delta \circ \mathcal{E})y.
$$
Componentwise this implies for $k=1,\ldots,N$
\begin{align*}
  |\lambda|^2 \|y_k\|^2
  &= \|\mathbf{G}_k(i \omega) \sum\limits_{j=1}^N e_{kj}\Delta_{kj}y_j\|^2\\
  &\le \|\mathbf{G}_k(i \omega)\|^2
  \Bigl\|\Delta_k\begin{bmatrix} e_{k1}y_1\\\vdots\\e_{kN}y_N\end{bmatrix}\Bigr\|^2\\
  &\le \|\mathbf{G}_k(i \omega)\|^2\|\Delta_k\|^2\sum_{j=1}^N \|e_{kj}y_j\|^2\\
  &\le \|\Delta\|^2 \|\mathbf{G}_k(i\omega)\|^2 \sum_{j=1}^N e_{kj}^2\|y_j\|^2.
\end{align*}
With $z=\left( \|y_k\|^2 \right)_{k=1,\ldots,N}$ it follows that
$$
|\lambda|^2z \le \|\Delta\|^2\diag(\|\mathbf{G}_k(i\omega)\|^2)\mathcal{E}^{\circ 2}z.
$$
Lemma \ref{lemmaperronforbenius} now implies
\[
\frac{|\lambda|^2}{\|\Delta\|^2}
\le \rho \left(\diag(\|\mathbf{G}_k(i \omega)\|^2)\mathcal{E}^{\circ2}  \right)
\leq\Theta^2.
\]
Thus,
\(
|\lambda|\le\|\Delta\|\Theta 
\)
and  hence \eqref{specinball} holds
with $r=\|\Delta\|\Theta <1$.

Therefore $I-\mathbb{L}_{\infty}(\Delta \circ \mathcal{E})$ is invertible.
In particular, it follows that  $\Delta \circ \mathcal{E}$ is an admissible feedback operator.
Moreover, the  exponential stability of $(T(t))_{t\ge 0}$ implies
$\Psi_\infty\in \mathcal{L}(X,L^2([0,\infty);Y))$.
Thus   for every $x_0 \in X$ the equation
\begin{equation}\label{eqn:feedb}
y= \Psi_\infty x_0+ \mathbb{L}_{\infty}(\Delta \circ \mathcal{E})y
\end{equation}
has a unique solution $y =y_{x_0}\in L^2([0,\infty);Y)$ that fulfills 
\begin{equation}\label{eqn:absch}
\|y_{x_0}\| \le M \|x_0\|
\end{equation}
 for some $M > 0$,    $M$  independent of $x_0$. Thanks to \eqref{zusammnhangfeedbackmirurspruenglichemsystem}, we have 
$$y_{x_0}=\Psi^{\Delta \circ \mathcal{E}}_{\infty} x_0$$ for every $x_0\in X$.
 Since   $\Sigma^{\Delta \circ \mathcal{E}}$ is  well-posed,  the $C_0$-semigroup $(T^{\Delta \circ \mathcal{E}}(t))_{t\ge 0}$ of $\Sigma^{\Delta \circ \mathcal{E}}$ satisfies, see \eqref{zusammnhangfeedbackmirurspruenglichemsystem},
\begin{align*}
T^{\Delta \circ \mathcal{E}}(t)x_0&= T(t)x_0+\Phi_t((\Delta \circ \mathcal{E})\Psi^{\Delta \circ \mathcal{E}}_\infty x_0)\\
&= T(t)x_0+\Phi_t((\Delta \circ \mathcal{E})y_{x_0})
\end{align*}
$  t\ge 0,~x_0\in X$.

Since $\mathbb{T}$ is exponentially stable, Datko's theorem (cf.~\cite[\rm{Thm.}V.5.8.]{engel1999one})
yields a constant $M_0>0$ such that 
\begin{equation}\label{stabil}
 \int_{0}^{\infty}\| T(t)x_0\|^2dt\le M_0\|x_0\|^2
\end{equation}
for every $x_0\in X$.
Now inequality \eqref{eqn:absch}, Remark \ref{phiinL2} and inequality \eqref{stabil} imply  
\begin{align*}
  &\int_0^{\infty} \|T^{\Delta \circ \mathcal{E}}(t)x_0\|^2dt \\
  &\le2 \int_{0}^{\infty}\| T(t)x_0\|^2dt
  + 2 \int_0^{\infty} \|\Phi_t((\Delta \circ \mathcal{E})y_{ x_0}) \|^2dt\\
  &\le 2M_0\|x_0\|^2
  + 2 \int_0^{\infty} \|\Phi_\infty((\Delta \circ \mathcal{E})y_{ x_0})(t) \|^2dt\\
  & \le 2M_0\|x_0\|^2
  + 2 M_1\int_0^{\infty} \|y_{ x_0}(t) \|^2dt\\
  & \le 2(M_0+M_1M^2) \|x_0\|^2.
\end{align*}
By Datko's theorem, 
it follows that  $(T^{\Delta \circ \mathcal{E}}(t))_{t\ge 0}$ is exponentially stable.
Thus  $r(\Sigma,\mathcal{E}) \ge \frac{1}{\Theta}$.
\end{proof}

\section{Stability radius for regular  systems   }\label{sec:stabilityradiusregular}

In this section we give an upper bound for   the stability radius  of $N$ well-posed linear systems under the additional assumption that every subsystem $(\mathbb{T}_i,\Phi_i,\Psi_i,\mathbb L_i)$ is a regular linear  system with feedthrough operator $D_i=0$. More precisely, we  consider $N$ regular linear  systems $\Sigma_j=(A_j,B_j,C_j,0)$, $j=1\ldots,N$, of the form
\begin{align}\label{eqn:subsystems}
\dot{x}_j(t)&= A_jx_j(t)+B_ju_j(t),\\ \notag
y_j(t)&= (C_j)_Lx_j(t)
\end{align}
and assume that $A_j$ generates an exponentially stable $C_0$-semigroup $\left( T_j(t) \right)_{t \ge 0}$ on $X_j$,  $B_j \in \mathcal{L}(U_j,(X_j)_{-1})$ and $C_j\in \mathcal{L}(D(A_j),Y_j)$.  Clearly, 
\begin{align*}
A=\left[\begin{smallmatrix}
A_1 & &0\\
& \ddots &  \\
0 & & A_N
\end{smallmatrix}\right]:D(A):= \bigoplus\limits_{i=1}^N D(A_i) \subseteq X \rightarrow X
\end{align*}
generates an exponentially stable $C_0$-semigroup $\left(T(t) \right)_{t \ge 0}$ on $X$, where $X$ is defined by $\eqref{eqn:sum}$. Additionally we define
\begin{align*}
B&=
\begin{bmatrix}
B_1 & & 0\\
& \ddots & \\
0& &  B_N \end{bmatrix}
\in L(U,X_{-1}), \\
C&=
\begin{bmatrix}
C_1 & &0 \\
& \ddots & \\
0 & &  C_N 
\end{bmatrix}
\in \mathcal{L}(D(A),Y),
\end{align*}
where $U$ and $Y$ are defined by $\eqref{eqn:sum}$.  Note, that $X_{-1}=\bigoplus_{i=1}^N (X_i)_{-1}$. Then $\Sigma=(A,B,C,0)$ is a regular linear system. With $\Delta $, $\mathcal{E}$ and the couplings of the systems  defined as in Section \ref{sec:stabilityradiuswellposed}  the entire interconnected system can be represented via
\begin{equation}\label{eqn:entiresystemwithperturbation}
\dot{x}(t)=A^{\Delta \circ \mathcal{E}}x(t)=\left( A_{-1}+ B \left( \Delta \circ \mathcal{E} \right)C_L \right)x(t)
\end{equation}
with $C_L=\diag\left((C_1)_L,\ldots,(C_N)_L \right)$.

\begin{theorem}\label{thm:stabilityradiusforrs}
  If $\Sigma=(A,B,C,0)$ is a regular exponentially stable linear system,
  then the stability radii satisfy
\begin{align*}
 r(\Sigma,\mathcal{E})\leq r_{2,\infty}(\Sigma,\mathcal{E})\le\frac{1}{\Theta},
\end{align*}
with $\Theta$ defined as in Theorem \ref{thm:stabilityradiusforwps}.
If additionally, $Y$ is finite-dimensional, then 
\begin{align*}
  r( \Sigma, \mathcal{E})=r_{2,\infty}(\Sigma,\mathcal{E})=\frac{1}{\Theta}.
\end{align*}
\end{theorem}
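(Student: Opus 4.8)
The plan is to establish the upper bound $r_{2,\infty}(\Sigma,\mathcal{E}) \le 1/\Theta$; combined with the inequality $r(\Sigma,\mathcal{E}) \le r_{2,\infty}(\Sigma,\mathcal{E})$ already noted and, in the finite-dimensional case, with the lower bound $r(\Sigma,\mathcal{E}) \ge 1/\Theta$ of Theorem~\ref{thm:stabilityradiusforwps}, this yields the full statement. We may assume $\Theta>0$, since otherwise $1/\Theta=\infty$ and nothing is to prove. The engine of the argument is a destabilization criterion: \emph{if there exist $\omega_0\in\R$ and $y\neq 0$ with $y=\mathbf{G}(i\omega_0)(\Delta\circ\mathcal{E})y$, then $\Delta$ violates the defining condition of $r_{2,\infty}$, so that $r_{2,\infty}(\Sigma,\mathcal{E})\le\|\Delta\|_{2,\infty}$.}

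To prove the criterion I would write $K=\Delta\circ\mathcal{E}$ and use that $A$ is exponentially stable, so $i\omega_0\in\varrho(A)=\varrho(A_{-1})$, and set $x:=(i\omega_0 I-A_{-1})^{-1}BKy$. Since $\Sigma$ is regular with feedthrough $0$, we have $\mathbf{G}(s)=C_L(sI-A_{-1})^{-1}B$ together with $(sI-A_{-1})^{-1}Bu\in D(C_L)$ for $u\in U$; hence $x\in D(C_L)$ and $C_Lx=\mathbf{G}(i\omega_0)Ky=y$. Then $(i\omega_0 I-A_{-1})x=BKy=BKC_Lx$, so that $(A_{-1}+BKC_L)x=i\omega_0 x\in X$, i.e.\ $x\in D(A^K)$ and $A^Kx=i\omega_0 x$; moreover $x\neq 0$ because $C_Lx=y\neq 0$. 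Consequently either $K$ is inadmissible, or $A^K$ has the spectral value $i\omega_0$ on the imaginary axis and the closed-loop semigroup is not exponentially stable. In both cases $\Delta$ fails the condition in Definition~\ref{def:stabilityradiuswps}, which proves the criterion.

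Next I would construct, for any fixed $\omega_0$ with $\beta^2:=\rho\bigl(\diag(\|\mathbf{G}_k(i\omega_0)\|^2)\mathcal{E}^{\circ 2}\bigr)>0$ and any $\varepsilon>0$, a perturbation meeting the criterion with $\|\Delta\|_{2,\infty}$ close to $1/\beta$. Writing $g_i=\|\mathbf{G}_i(i\omega_0)\|$, Lemma~\ref{lemmaperronforbenius}(i) supplies $0\neq z\in\R^N_{\ge 0}$ with $g_i^2\sum_j e_{ij}^2 z_j=\beta^2 z_i$. For each $i$ with $z_i>0$ choose a unit vector $v_i\in U_i$ with $\|\mathbf{G}_i(i\omega_0)v_i\|\ge g_i-\varepsilon$, put $y_i:=\sqrt{z_i}\,\mathbf{G}_i(i\omega_0)v_i/\|\mathbf{G}_i(i\omega_0)v_i\|$ (so $\|y_i\|^2=z_i$), and define the rank-one operators $\Delta_{ij}:=\alpha_{ij}\langle\,\cdot\,,y_j/\|y_j\|\rangle v_i$ with $\alpha_{ij}\ge 0$ proportional to $e_{ij}\sqrt{z_j}$; for $z_i=0$ or $z_j=0$ set the corresponding entries to $0$ and $y_i=0$. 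The proportionality constant is fixed by $\sum_j e_{ij}\alpha_{ij}\sqrt{z_j}=\sqrt{z_i}/\|\mathbf{G}_i(i\omega_0)v_i\|$, which makes $\sum_j e_{ij}\Delta_{ij}y_j$ equal to the scalar multiple $(\sqrt{z_i}/\|\mathbf{G}_i(i\omega_0)v_i\|)v_i$ of $v_i$, whence $\mathbf{G}_i(i\omega_0)\sum_j e_{ij}\Delta_{ij}y_j=y_i$ holds exactly and $y=\mathbf{G}(i\omega_0)(\Delta\circ\mathcal{E})y$ with $y\neq 0$. A Cauchy-Schwarz computation using $\sum_j e_{ij}^2 z_j=\beta^2 z_i/g_i^2$ gives, for the proportional choice, $(\sum_j\alpha_{ij}^2)^{1/2}=g_i/(\beta\|\mathbf{G}_i(i\omega_0)v_i\|)\le g_i/(\beta(g_i-\varepsilon))$, and since $\|\Delta_{ij}\|=\alpha_{ij}$, the definition \eqref{norm:delta} yields $\|\Delta\|_{2,\infty}\le\max_i g_i/(\beta(g_i-\varepsilon))$. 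Letting $\varepsilon\searrow 0$ and applying the criterion gives $r_{2,\infty}(\Sigma,\mathcal{E})\le 1/\beta$, and taking the supremum over $\omega_0$, i.e.\ $\beta\to\Theta$, yields $r_{2,\infty}(\Sigma,\mathcal{E})\le 1/\Theta$. For finite-dimensional $Y$, Theorem~\ref{thm:stabilityradiusforwps} gives $r(\Sigma,\mathcal{E})\ge 1/\Theta$, so $1/\Theta\le r(\Sigma,\mathcal{E})\le r_{2,\infty}(\Sigma,\mathcal{E})\le 1/\Theta$ forces equality throughout.

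I expect the destabilization criterion to be the delicate part: checking that $x$ genuinely lands in $D(A^K)$ and that $i\omega_0$ is an actual spectral point of the closed-loop generator (rather than merely having $1\in\sigma(\mathbf{G}(i\omega_0)K)$) is exactly where regularity and the vanishing feedthrough are used, and it is the reason the upper bound is stated for regular systems with $D=0$. A secondary technical point, caused by the possible infinite-dimensionality of the $U_i$, is that $\mathbf{G}_i(i\omega_0)$ need not attain its norm; this is circumvented by the observation that the forced value of $\sum_j e_{ij}\Delta_{ij}y_j$ is a scalar multiple of $v_i$, so the eigenrelation $y=\mathbf{G}(i\omega_0)(\Delta\circ\mathcal{E})y$ holds exactly for merely approximately norm-maximizing $v_i$, with only the norm estimate incurring the harmless factor $g_i/(g_i-\varepsilon)$.
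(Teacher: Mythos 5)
Your proposal is correct and follows essentially the same route as the paper's proof: a rank-one perturbation $\Delta$ built from the Perron vector of $\diag(\|\mathbf{G}_k(i\omega_0)\|^2)\mathcal{E}^{\circ 2}$ at a near-optimal frequency, with approximately norm-attaining input vectors, whose closed loop has the eigenvalue $i\omega_0$ via $x=(i\omega_0 I-A_{-1})^{-1}B(\Delta\circ\mathcal{E})y$ (this is exactly where the paper, like you, uses regularity and $D=0$), followed by the same $\|\Delta\|_{2,\infty}$ estimate and the appeal to Theorem~\ref{thm:stabilityradiusforwps} for equality. Your packaging of the spectral step as a standalone ``destabilization criterion'' and your $\varepsilon$/limit bookkeeping in place of the paper's $\delta,\alpha$ parameters are only presentational differences.
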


\begin{proof}
  By the definition of the stability radii it suffices to show that
  for every $\delta \in (0,\Theta)$ there exists
  $\Delta \in \mathcal{L}(Y,U)$ with
  $\|\Delta\|_{2,\infty} \le \frac{1}{\Theta-\delta}$ such that
  either $\Delta \circ\mathcal{E}$ is not an admissible feedback
  or $A^{\Delta \circ \mathcal{E}}$ does not generate an
  exponentially stable $C_0$-semigroup.
  
  Let $\delta \in (0,\Theta)$ be arbitrary.
  Choose  $\alpha \in (0,1)$ with $\frac{\Theta- \delta}{\alpha}< \Theta$.
  By the definition of $\Theta$  there exist $\omega_0 \in \R$ such that
  \begin{align*}
    \lambda&:=\rho\left(\left[
      \begin{smallmatrix}
        \|\mathbf{G}_1(i\omega_0)\|^2&&&0\\
        &\ddots&&\\0
        &&&\|\mathbf{G}_N(i\omega_0)\|^2
      \end{smallmatrix}\right]
    \mathcal{E}^{\circ 2}
    \right)\\
    &\ge \left(\frac{\Theta-\delta}{\alpha} \right)^2.
  \end{align*}
  In particular $\lambda>0$.
  By Lemma \ref{lemmaperronforbenius} there exists a vector $z \in \R^N_{\ge 0}, z \neq 0$,
  such that 
  \[
  \begin{bmatrix}
    \|\mathbf{G}_1(i\omega_0)\|^2&&&0\\
    &\ddots&&\\0
    &&&\|\mathbf{G}_N(i\omega_0)\|^2
  \end{bmatrix}
  \mathcal{E}^{\circ 2}z=\lambda z.
  \]
  Componentwise this reads 
  \begin{equation}\label{GksumzgleichThetaz}
    \|\mathbf{G}_k(i \omega_0)\|^2\sum_{l=1}^N e_{kl}^2z_l=\lambda z_k.
  \end{equation}

  Now for every $k\in\{1,\dots,N\}$ we choose
  $u_k\in U_k$ such that
  $\|\mathbf{G}_k(i \omega_0)u_k\|\ge \alpha \|\mathbf{G}_k(i \omega_0)\|\|u_k\|$
  and set $y_k=\mathbf{G}_k(i \omega_0)u_k$.
  If $z_k \neq 0$ then also $\|\mathbf{G}_k(i \omega_0)\| \neq 0$
  by $\eqref{GksumzgleichThetaz}$, and we can choose the norm of $u_k$ such that
  $z_k=\|y_k\|^2$.
  If $z_k=0$ then we set  $u_k=y_k=0$.
  
  For $k,l\in\{1,\ldots,N\}$ we define
  \begin{equation}\label{constructionofDelta}
    \Delta_{kl}=\begin{cases}
    \, \frac{u_k e_{kl} \langle \cdot,y_l\rangle}{\sum_{j=1}^N e^2_{kj} \|y_j\|^2}
    &\text{if } \sum_{j=1}^N e^2_{kj} \|y_j\|^2 \neq 0, \\
    \,  0 &\text{otherwise.} 
    \end{cases}
  \end{equation}
  Next we set
  $x_k=\left(i \omega_0 I-(A_k)_{-1} \right)^{-1}B_ku_k$
  and
  \(x=\left[\begin{smallmatrix}
      x_1\\ \vdots\\x_N
    \end{smallmatrix}\right]
  \). 
  Note that $x_k\in D((C_k)_L)$ and
  $(C_k)_Lx_k=\mathbf{G}_k(i\omega_0)u_k=y_k$.
  Hence $x_k\neq0$ if $z_k\neq0$ and thus $x\neq0$.

  Now we show that
  \begin{equation}\label{eqn:eigenv-comp}
    (A_k)_{-1}x_k+B_k\sum\limits_{l=1}^N e_{kl} \Delta_{kl} \left(C_l\right)_Lx_l
    =i \omega_0 x_k
  \end{equation}
  for all $k \in \{1,\ldots,N\}$.
  If $\sum_{j=1}^N e_{kj}^2 \|y_j \|^2 \neq 0$, then indeed
  \begin{align*}
    (&A_k)_{-1}x_k+B_k\sum_{l=1}^N e_{kl}\Delta_{kl}\left(C_l\right)_Lx_l \\
    &=(A_k)_{-1}x_k+B_k\sum_{l=1}^N e_{kl}
    \frac{u_ke_{kl} \|y_l\|^2}{\sum_{j=1}^N e_{kj}^2 \|y_j\|^2}\\ 
    &=(A_k)_{-1}x_k+B_ku_k=i \omega_0 x_k.
  \end{align*}
  If  $\sum_{j=1}^N e_{kj}^2\|y_j\|^2=0$, then
  $\Delta_{kl}=0$ for all $l \in \{1,\ldots,N\}$. Moreover
  $z_k=0$ by $\eqref{GksumzgleichThetaz}$, which in turn implies
  $x_k=0$ by construction. Hence \eqref{eqn:eigenv-comp} holds too.
  We have thus shown that
  \begin{equation}\label{eqn:eingenv}
    (A_{-1}+B(\Delta \circ \mathcal{E}) C_L )x= i \omega_0 x.
  \end{equation}

  Since $i\omega_0 x\in X$ it follows that
  $x \in D(A^{\Delta \circ \mathcal{E}})$.
  If $\Delta \circ \mathcal{E}$ is admissible,
  equation $\eqref{eqn:eingenv}$ implies that
  $A^{\Delta \circ \mathcal{E}}$ does not generate an exponentially stable
  $C_0$-semi\-group.

  It remains to show that $\|\Delta\|_{2,\infty} \le \frac{1}{\Theta-\delta}$.

If $z_k \neq 0$, then $\eqref{GksumzgleichThetaz}$ yields
$\sum_{l=1}^N e_{kl}^2 \|y_l\|^2 \neq 0$ and $\|\mathbf{G}_k(i \omega_0)\| \neq 0$,
and we compute 
\begin{align*}
  \sum_{l=1}^N \|\Delta_{kl}\|^2
  &= \sum_{l=1}^N \left( \frac{\|u_k\| \cdot e_{kl} \|y_l\|}{\sum_{j=1}^N e_{kj}^2 \|y_j\|^2}\right)^2\\
&= \sum_{l=1}^N \frac{\|u_k\|^2\cdot e_{kl}^2 \|y_l\|^2}{\left(\sum_{j=1}^N  e_{kj}^2\|y_j\|^2\right)^2}\\
&= \frac{\|u_k\|^2}{\sum_{j=1}^N e_{kj}^2 \|y_j\|^2}\\
&\le \frac{\|\mathbf{G}_k(i\omega_0)u_k\|^2}{\alpha^2\|\mathbf{G}_k(i \omega_0)\|^2 \sum_{j=1}^N e_{kj}^2z_j}\\
&= \frac{z_k}{\alpha^2\|\mathbf{G}_k(i \omega_0)\|^2 \sum_{j=1}^N e_{kj}^2z_j}\\
&=\frac{1}{\alpha^2  \lambda}
\le \frac{1}{ (\Theta- \delta)^2}.
\end{align*}
If $z_k=0$, then $u_k=0$ and therefore $\Delta_{kl}=0$ for $l \in \{1,\ldots,N\}$,\\
i.e.\ $\sum_{l=1}^N \|\Delta_{kl}\|^2=0$.
Consequently,
\[
\|\Delta\|_{2,\infty}= \max_{k=1,\ldots,N}\left( \sum_{l=1}^N \|\Delta_{kl}\|^2 \right)^{\frac{1}{2}} \le \frac{1}{\Theta-\delta}.
\]
The statement for finite-dimensional output spaces $Y$
now follows from Theorem \ref{thm:stabilityradiusforwps}.
\end{proof}


\section{Conclusion}

In this paper we studied the  stability radius for finitely many interconnected linear exponentially stable well-posed systems with respect to static perturbations. If the output space of each system is finite-dimensional, then we were able to show a sharp lower bound. Moreover, for regular linear systems with zero feedthrough operator and finite-dimensional output spaces a formula for the stability radius has been developed. An interesting problem for future research is the characterization of the real stability radius and the stability radius with respect to dynamic perturbations.

\bibliographystyle{abbrv}
\bibliography{lit}

\begin{thebibliography}{10}

\bibitem{engel1999one}
K.-J. Engel and R.~Nagel.
\newblock {\em One-parameter semigroups for linear evolution equations}, volume
  194.
\newblock Springer Science \& Business Media, 1999.

\bibitem{HiPr86}
D.~Hinrichsen and A.~J. Pritchard.
\newblock Stability radii of linear systems.
\newblock {\em Systems Control Lett.}, 7(1):1--10, 1986.

\bibitem{hinrichsen1986stability}
D.~Hinrichsen and A.~J. Pritchard.
\newblock Stability radius for structured perturbations and the algebraic
  riccati equation.
\newblock {\em Systems \& Control Letters}, 8(2):105--113, 1986.

\bibitem{HiPr90}
D.~Hinrichsen and A.~J. Pritchard.
\newblock Real and complex stability radii: a survey.
\newblock In {\em Control of uncertain systems ({B}remen, 1989)}, volume~6 of
  {\em Progr. Systems Control Theory}, pages 119--162. Birkh\"{a}user Boston,
  Boston, MA, 1990.

\bibitem{hinrichsen2009composite}
D.~Hinrichsen and A.~J. Pritchard.
\newblock Composite systems with uncertain couplings of fixed structure: Scaled
  {R}iccati equations and the problem of quadratic stability.
\newblock {\em SIAM Journal on Control and Optimization}, 47(6):3037--3075,
  2009.

\bibitem{holderrieth1991matrix}
A.~Holderrieth.
\newblock Matrix multiplication operators generating one parameter semigroups.
\newblock {\em Semigroup Forum}, 42(1):155--166, 1991.

\bibitem{horn1990matrix}
R.~A. Horn and C.~R. Johnson.
\newblock {\em Matrix analysis}.
\newblock Cambridge University press, 1990.

\bibitem{jacob2012linear}
B.~Jacob and H.~J. Zwart.
\newblock {\em Linear port-Hamiltonian systems on infinite-dimensional spaces},
  volume 223.
\newblock Springer Science \& Business Media, 2012.

\bibitem{PrTo87}
A.~J. Pritchard and S.~Townley.
\newblock A stability radius for infinite dimensional systems.
\newblock In {\em Proc. International Conference on Control of Distributed
  Parameter Systems}, pages 272--291. Springer, Berlin-New York, 1987.

\bibitem{PrTo88}
A.~J. Pritchard and S.~Townley.
\newblock Robust compensator design via structured stability radii.
\newblock {\em Systems Control Lett.}, 11(1):33--37, 1988.

\bibitem{staffans2005well}
O.~Staffans.
\newblock {\em Well-posed linear systems}, volume 103.
\newblock Cambridge University Press, 2005.

\bibitem{thomas1997vector}
E.~G. Thomas.
\newblock Vector-valued integration with applications to the operator-valued
  ${H}^{\infty}$ space.
\newblock {\em IMA Journal of Mathematical Control and Information},
  14(2):109--136, 1997.

\bibitem{tucsnak2014well}
M.~Tucsnak and G.~Weiss.
\newblock Well-posed systems~-~{T}he {LTI} case and beyond.
\newblock {\em Automatica}, 50(7):1757--1779, 2014.

\bibitem{weiss1994regular}
G.~Weiss.
\newblock Regular linear systems with feedback.
\newblock {\em Mathematics of Control, Signals and Systems}, 7(1):23--57, 1994.

\end{thebibliography}
\end{document}